\documentclass[a4paper,11pt,english,leqno]{amsart}
\usepackage[utf8x]{inputenc}
\usepackage[bitstream-charter]{mathdesign}
\usepackage{amsmath, xcolor,amsthm,epsfig,epstopdf,url,array}
\usepackage[colorlinks=true]{hyperref}
\usepackage[all]{xy}
\usepackage{tikz}
\tikzset{node distance=2cm, auto}
\usepackage{subcaption}
\usepackage[backrefs]{amsrefs}
\usepackage{pgf,tikz}
\usetikzlibrary{arrows}
\usepackage{csquotes}


\theoremstyle{plain}
\newtheorem{thm}{Theorem}[section]
\newtheorem{lem}[thm]{Lemma}
\newtheorem{prop}[thm]{Proposition}
\newtheorem{cor}[thm]{Corollary}

\newtheorem{obs}[thm]{Remark}
\newtheorem{problem}[thm]{Problem}

\theoremstyle{definition}

\newtheorem{prob}[thm]{Problem}

\theoremstyle{remark}

\theoremstyle{plain}

\newcommand{\C}{\mathbb{C}}

\newcommand{\Pc}{\mathbb{P}(\mathbb{C}^2)}
\renewcommand{\S}{\mathbb{S}^2}

\newcommand{\CH}{\ \mathcal{H}_N}

\setcounter{tocdepth}{1} 


\begin{document}

\title[A sharp Bombieri inequality]{A sharp Bombieri inequality, logarithmic energy and well conditioned polynomials}

\author{Uju\'e Etayo}

\date{\today{}}

\thanks{The author has been supported by the Austrian Science Fund FWF project F5503 (part of the Special Research Program (SFB) Quasi-Monte Carlo Methods: Theory and Applications), by MTM2017-83816-P from Spanish Ministry of Science MICINN and by 21.SI01.64658 from Universidad de Cantabria and Banco de Santander.
}

\subjclass[2010]{11C08; 11J99; 30E10; 30C15; 31C12}

\keywords{Bombieri inequality, minimal logarithmic energy, elliptic Fekete points, well conditioned polynomials, Bombieri-Weyl norm}

\address{5010 Institute of Analysis and Number Theory 8010 Graz, Kopernikusgasse 24/II}
\email{etayo@math.tugraz.at}

\begin{abstract}
In this paper we explore the connections between minimizers of the discrete logarithmic energy on the sphere $\mathbb{S}^{2}$, univariate polynomials with optimal condition number in the Shub-Smale sense and a quotient involving norms of polynomials.
Our main results are that polynomials with optimal condition number produce spherical points with small logarithmic energy (the reverse result was already known) 
and a sharp Bombieri type inequality for univariate polynomials with complex coefficients.
\end{abstract}

\maketitle
\tableofcontents



\section{Introduction and main results}\label{sec1}


\subsection{Inequalities of polynomials}

Let $P_{1},\ldots,P_{m}\in K[x_{1},\ldots,x_{n}]$ be a set of polynomials with $K = \mathbb{C}$ or $\mathbb{R}$ and let $||*||$ be a norm defined in $K[x_{1},\ldots,x_{n}]$.
\begin{prob}\label{prob1}\label{eq:main}
Define a constant $M$ depending only on the degrees of the polynomials $P_{1},\ldots,P_{m}$ and such that
\begin{equation*}
||P_{1}||\ldots||P_{m}||
\leq
M
||P_{1}\ldots P_{m}||.
\end{equation*}
\end{prob}
The literature versing on this problem is rather extensive, we refer the interested reader to \cites{BEAUZAMY1990219,borwein_1994,BOYD1993115,10.1112/blms/26.5.449,PINASCO,BEAUZAMY1985390,Pritsker_Ruscheweyh_2009,Pritsker2009} for example.
The inequality in Problem \ref{eq:main} has been proposed for many different norms, we recomend 
the articles \cites{Pritsker2009,Pritsker_Ruscheweyh_2009} for results relating different norms of polynomials.
Here we are concerned only by the Bombieri-Weyl norm.


\subsubsection{Bombieri-Weyl norm}

We denote by $\CH$ the vector space of bivariate homogeneous polynomials of degree $N$,  that is the set of polynomials of the form
\begin{equation}\label{eq:polyg}
g(x,y)=\sum_{i=0}^Na_i x^iy^{N-i},\quad a_i\in\C
\end{equation}
where $x,y$ are complex variables. The Weyl norm of $g$  (sometimes called  Kostlan or Bombieri-Weyl or Bombieri norm) is
\begin{equation*}
\|g\|=\left(\sum_{i=0}^N\binom{N}{i}^{-1}|a_i|^2\right)^{1/2},
\end{equation*}
where the binomial coefficients satisfy the property $\|g\|=\|g\circ U\|$ where $U\subseteq\C^{2\times 2}$ is any unitary $2\times 2$ matrix and $g\circ U\in\CH$ is the polynomial given by $g\circ U(x,y)=g\left( U\binom{x}{y}\right)$. 
An alternative definition for the Bombieri-Weyl norm is
\begin{equation*}
\|g\|^2=\frac{N+1}{\pi}\int_{ \Pc}\frac{|g(\eta)|^2}{\|\eta\|^{2N}}\,dV(
\eta),
\end{equation*}
where the integration is made with respect to the volume form $V$ 
arising from the standard Riemannian structure in $\Pc$. 
Given a univariate degree $N$ polynomial with complex coefficients $P(z)=\sum_{i=0}^Na_iz^i$, it has a homogeneous counterpart $g(x,y)=\sum_{i=0}^Na_i x^iy^{N-i}$ and its Weyl norm is defined via its homogenized version:
\begin{equation*}
\|P\|=\|g\|.
\end{equation*}


\subsubsection{The product of two polynomials}

A partial answer to Problem \ref{eq:main} was given in \cite{BEAUZAMY1990219}, where authors proved what is known as Bombieri's inequality.

\begin{thm}[\cite{BEAUZAMY1990219}*{Theorem 1.2}, Bombieri's inequality]\label{thmBombi}
Let $P,Q$ be homogeneous polynomials of degrees $m,n$ respectively.
Then
\begin{equation*}
||PQ||
\geq
\sqrt{\frac{m!n!}{(m+n)!}}||P|| \,  ||Q||,
\end{equation*}
where $||*||$ is the Bombieri-Weyl norm and the inequality is sharp.
\end{thm}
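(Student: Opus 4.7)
The plan is to reduce Bombieri's inequality to a classical lower bound on the permanent of a positive semidefinite Hermitian block matrix, by factoring the two homogeneous polynomials into linear forms.

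I would first factor $P$ and $Q$ completely over $\C$ as $P = \prod_{i=1}^{m} \ell_i$ and $Q = \prod_{j=1}^{n} \ell_j'$, where each factor $\ell_i(x,y) = \alpha_i x + \beta_i y$ is linear. The main tool is the classical identity
$$\left\|\prod_{i=1}^{N} \ell_i\right\|^2 \;=\; \frac{1}{N!}\,\mathrm{per}(G),$$
where $G$ is the $N \times N$ Hermitian matrix with entries $G_{ij} = \langle \ell_i, \ell_j\rangle = \alpha_i \overline{\alpha_j} + \beta_i \overline{\beta_j}$ (the Bombieri-Weyl inner product of the linear forms). This identity can be obtained by expanding the coefficients of $\prod_i \ell_i$ as mixed elementary symmetric polynomials in the pairs $(\alpha_i,\beta_i)$, computing squared modulus, and matching the resulting combinatorial sum with the definition of the permanent via $\binom{N}{k}^{-1} = k!(N-k)!/N!$.

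Applying this identity to $\|P\|^2$, $\|Q\|^2$ and $\|PQ\|^2$, and writing the full $(m+n) \times (m+n)$ Gram matrix of all linear factors in block form
$$G \;=\; \begin{pmatrix} A & B \\ B^* & D \end{pmatrix},$$
with $A$ and $D$ the Gram matrices of $\{\ell_i\}$ and $\{\ell_j'\}$ respectively, the target inequality $\|PQ\|^2 \ge \tfrac{m!\,n!}{(m+n)!}\|P\|^2\|Q\|^2$ translates exactly to the permanent inequality
$$\mathrm{per}(G) \;\ge\; \mathrm{per}(A)\,\mathrm{per}(D).$$
Since $G$ is positive semidefinite as the Gram matrix of vectors in $\C^2$, this is an instance of Lieb's classical permanent inequality for positive semidefinite block matrices, from which the theorem follows.

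The main obstacle is justifying the two auxiliary ingredients: the permanent formula for the Bombieri-Weyl norm demands careful combinatorial bookkeeping with the $\binom{N}{k}^{-1}$ weights, and Lieb's permanent inequality is itself a nontrivial classical result (typically proved via concavity of $A \mapsto \mathrm{per}(A)^{1/n}$ on PSD matrices, or by direct expansion and positivity arguments). Sharpness is then immediate: for $P = x^m$ and $Q = y^n$ every $\ell_i$ equals $x$ and every $\ell_j'$ equals $y$, so $B = 0$, the matrix $G$ is block diagonal, both sides of the permanent inequality reduce to $m!\,n!$, and both sides of Bombieri's inequality collapse to $\binom{m+n}{m}^{-1}$.
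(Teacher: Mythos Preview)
The paper does not give its own proof of this theorem: it is simply quoted as a known result from Beauzamy--Bombieri--Enflo--Montgomery, so there is no in-paper argument to compare your proposal against.

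Your reduction to Lieb's permanent inequality is a valid and well-known route in the bivariate setting. The identity $\|\prod_{i}\ell_i\|^2=\tfrac{1}{N!}\,\mathrm{per}\bigl(\langle\ell_i,\ell_j\rangle\bigr)$ holds because the Bombieri--Weyl inner product on $\mathcal{H}_N$ is exactly the symmetric tensor product of the standard Hermitian form on $\C^2$; your normalization check (e.g.\ $\ell_i\equiv x$ gives $\mathrm{per}=N!$ and $\|x^N\|=1$) confirms the factor. The block Gram matrix $G$ is PSD as a Gram matrix, so Lieb's inequality $\mathrm{per}(G)\ge\mathrm{per}(A)\,\mathrm{per}(D)$ applies and yields the claim after the arithmetic you wrote. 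The sharpness example $P=x^m$, $Q=y^n$ is correct (both sides equal $\binom{m+n}{m}^{-1/2}$, not $\binom{m+n}{m}^{-1}$; you dropped a square root, which is harmless).

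One genuine caveat: your argument requires that $P$ and $Q$ split completely into linear forms, which forces the bivariate (equivalently, univariate after dehomogenization) case. The original theorem in the cited reference is stated and proved for homogeneous polynomials in any number of variables, where such a factorization is not available and the proof proceeds instead by a direct analysis of the coefficient inner product. Since the present paper works exclusively with $\mathcal{H}_N$, the space of bivariate homogeneous polynomials, your restriction is adequate for everything used here; but you should flag explicitly that you are proving the two-variable case, not the general statement as quoted.
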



\begin{cor}[\cite{PINASCO}*{Corollary 2.2}]\label{corBombi}
Let $P_{1},\ldots,P_{m}$ be homogeneous polynomials of degrees $k_{i}$ respectively.
Then
\begin{equation*}
||P_{1},\ldots,P_{m}||
\geq
\sqrt{\frac{k_{1}!\ldots k_{m}!}{(k_{1}+\ldots +k_{m})!}}||P_{1}|| \ldots ||P_{m}||,
\end{equation*}
where $||*||$ is the Bombieri-Weyl norm.
\end{cor}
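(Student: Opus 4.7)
The plan is to prove the corollary by straightforward induction on the number of factors $m$, using Bombieri's inequality (Theorem \ref{thmBombi}) as both the base case and the inductive engine. The base case $m=2$ is exactly the statement of Theorem \ref{thmBombi}, so there is nothing to show. The inductive step will group the first $m-1$ polynomials into a single homogeneous polynomial of degree $k_1+\cdots+k_{m-1}$ and apply the two-factor inequality to this grouping together with $P_m$.

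In more detail, suppose the inequality holds for any collection of $m-1$ homogeneous polynomials. The product $P_1\cdots P_{m-1}$ is itself a homogeneous polynomial of degree $K_{m-1}:=k_1+\cdots+k_{m-1}$, so Theorem \ref{thmBombi} applied to the pair $P_1\cdots P_{m-1}$ and $P_m$ yields
\begin{equation*}
\|P_1\cdots P_m\|\geq\sqrt{\frac{K_{m-1}!\,k_m!}{(K_{m-1}+k_m)!}}\,\|P_1\cdots P_{m-1}\|\,\|P_m\|.
\end{equation*}
Plugging in the induction hypothesis
\begin{equation*}
\|P_1\cdots P_{m-1}\|\geq\sqrt{\frac{k_1!\cdots k_{m-1}!}{K_{m-1}!}}\,\|P_1\|\cdots\|P_{m-1}\|
\end{equation*}
and multiplying the two radicals, the factor $K_{m-1}!$ telescopes between numerator and denominator, leaving precisely
\begin{equation*}
\|P_1\cdots P_m\|\geq\sqrt{\frac{k_1!\cdots k_m!}{(k_1+\cdots+k_m)!}}\,\|P_1\|\cdots\|P_m\|,
\end{equation*}
which is the desired bound.

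I do not anticipate any real obstacle here: the proof is a one-line induction once one notices that homogeneity is preserved under products, so that Theorem \ref{thmBombi} can indeed be re-applied at each step. The only thing that requires a moment of attention is bookkeeping on the factorials, and verifying that the chosen grouping (the first $m-1$ factors versus the last one) is compatible with the factorization structure of the constant; any grouping works by symmetry and produces the same bound.
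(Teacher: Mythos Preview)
Your induction argument is correct and is precisely the standard derivation of this corollary from the two-factor Bombieri inequality. The paper does not actually supply its own proof of Corollary~\ref{corBombi}; it merely quotes the result from \cite{PINASCO}, so there is nothing further to compare against.
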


In \cite{BOYD1993115} the same problem is addresed providing a sharp bound in the particular case of having two polynomials $P$ and $Q$ such that $P$ divides $Q$.


\subsection{A natural inequality}

A natural question in this setting is the following: given a univariate polynomial of degree $N$, that we denote by $p_{N}(x)$, what is the relation between the norm of the polynomial and the product of the norms of its factors? 
We restrict the question to monic polynomials.
\begin{prob}\label{mainprob}
Let $p_{N}(x) = \prod_{i=1}^{N} (x-z_{i})$, with $z_{i} \in \mathbb{C}$ for all $1\leq i \leq N$.
Give a constant $M$ depending only on $N$ such that
\begin{equation*}
\displaystyle \prod_{i=1}^{N} ||x-z_{i}||
\leq
M
\left|\left| \prod_{i=1}^{N} (x-z_{i}) \right|\right|,
\end{equation*}
where $||*||$ is the Bombieri-Weyl norm.
\end{prob}
In \cite{10.1112/blms/26.5.449} the author proves a sharp inequality 
\begin{equation*}
|P_{1}|\ldots|P_{m}|
\leq
C_{m}^{N}
|P_{1}\ldots P_{m}|,
\end{equation*}
where $N$ is the degree of $P_{1}\ldots P_{m}$, $C_m$ is a constant depending on $m$ and $|*|$ is the maximum norm on the unit circle.
To prove this result, he makes use of the logarithmic potential in the complex plane on a different way that we are doing here.

\noindent For the Bombieri-Weyl norm the only answer we have comes from Corollary \ref{corBombi}:
\begin{equation}\label{prebound}
\prod_{i=1}^{N} ||x-z_{i}||
\leq
\sqrt{N!}
\left|\left| \prod_{i=1}^{N} (x-z_{i}) \right|\right|.
\end{equation}
In \cite{PINASCO} the author proves another inequality, see \cite{PINASCO}*{Theorem 4.1} wich generalizes the one proposed in Theorem \ref{thmBombi}, but that in the conditions of Problem \ref{mainprob} provides the same result that equation \eqref{prebound}.



\subsection{Notation}\label{notation}

Throughout this article we work with points in the complex plane, $\mathbb{C}$, the Riemann sphere that we will denote by $\mathbb{S}\subset\mathbb{R}^{3}$ and is defined as the sphere centered in $(0,0,\frac{1}{2})$ of radius $\frac{1}{2}$, and the unit sphere $\mathbb{S}^2\subset\mathbb{R}^{3}$ centered in $(0,0,0) $ of radius $1$.

Three maps link these three objects.
Let $h:\mathbb{S}\longrightarrow\mathbb{S}^{2}$ be the homothetic transformation that maps $\hat{z}_{i} \mapsto 2\hat{z}_{i} - (0,0,1)$.
Let $\pi_{\mathbb{S}}: \mathbb{S}\backslash\{ (0,0,1) \} \longrightarrow \mathbb{C}$ be the stereographic proyection from the North pole, given by the map
\begin{equation*}
\hat{z} = (\hat{a},\hat{b},\hat{c}) \in \mathbb{S} \mapsto z = \frac{\hat{a} + i\hat{b}}{1-\hat{c}}.
\end{equation*}
Finally, let $\pi_{\mathbb{S}^{2}}: \mathbb{S}^{2}\backslash\{ (0,0,1) \} \longrightarrow \mathbb{C}$ be also an stereographic projection from the North pole given by the map
\begin{equation*}
x = (a,b,c) \in \mathbb{S}^{2} \mapsto z = \frac{a + ib}{1-c}.
\end{equation*}
Note that $(\pi_{\mathbb{S}^{2}} \circ h ) = \pi_{\mathbb{S}}$ and that the inverse applications of $\pi_{\mathbb{S}}$ and $\pi_{\mathbb{S}^{2}}$ are well defined.

We denote by $z_{i}$ the points on the complex plane $\mathbb{C}$, by $\hat{z}_{i}$ their proyection into the Riemann sphere through  the inverse map of $\pi_{\mathbb{S}}$ and $x_{i}$ their proyection into the unit sphere through the inverse map of $\pi_{\mathbb{S}^{2}}$.


\subsection{Main results}

There are two formulas that relate the logarithmic energy of a set of points on the sphere and  the condition number of a polynomial. 
One of them is given in Proposition \ref{def:workingmu} and was used in recent paper \cite{BEMOC} to provide a family of polynomials with small condition number.
The other one is given in Lemma \ref{ABS} and it includes a third ingredient, a quotient of norms of polynomials. 
In this paper we combine both formulas to obtain new results relating these three objects and partially solving Problem \ref{mainprob}. 
We will present here only the main results of the paper, other partial results could be found through sections \ref{sec3} and \ref{sec4}.
\medskip

Our first result is the reverse result of the Main result of \cite{SHUB19934}.
We prove that given a sequence of univariate polynomials with complex coefficients that are well conditioned, then the associated set of points on the sphere $\mathbb{S}^{2}$ has logarithmic energy close to the minimal.
\smallskip

\begin{thm}\label{thmain1}
Let $(p_{N}(x))_{N\in \mathbb{N}}$ be a sequence of monic polynomials with optimal condition number, i.e. $\mu_{norm}(p_{N}(x)) \leq C\sqrt{N}$ for some universal constant $C$, let $\omega_{N} = \{ x_{1},\ldots ,x_N \} \subset \mathbb{S}^{2}$ be a set of points obtained through the inverse of the projection $\pi_{\mathbb{S}^{2}}$ of the roots of the polynomials $p_{N}$.
Then 
\begin{equation*}
\mathcal{E}_{\log}(\omega_{N})
\leq
\min\limits_{\mu_{N} \subset \S}\mathcal{E}_{\log} (\mu_{N})
+ cN,
\end{equation*}
with $\mu_{N}$ a $N$-point subset of $\S$ and $c$ a constant depending only on $C$.
\end{thm}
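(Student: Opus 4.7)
My plan is to combine two identities. Writing the logarithmic energy in terms of the complex roots $z_{1},\ldots,z_{N}$ of $p_{N}$ via the chordal distance formula $\|x_{i}-x_{j}\| = 2|z_{i}-z_{j}|/\sqrt{(1+|z_{i}|^{2})(1+|z_{j}|^{2})}$ gives
\begin{equation*}
\mathcal{E}_{\log}(\omega_{N}) = -\binom{N}{2}\log 2 - \sum_{i<j}\log|z_{i}-z_{j}| + \frac{N-1}{2}\sum_{i=1}^{N}\log(1+|z_{i}|^{2}).
\end{equation*}
On the other hand, the Shub--Smale definition of $\mu_{norm}$ together with the hypothesis $\mu_{norm}(p_{N})\leq C\sqrt{N}$ yields a pointwise bound of the form $|p_{N}'(z_{i})|\geq \|p_{N}\|(1+|z_{i}|^{2})^{(N-1)/2}/C'$ at each root, with $C'$ depending only on $C$.

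Taking logarithms of this bound, summing over $i$, and using the identity $\sum_{i}\log|p_{N}'(z_{i})|=2\sum_{i<j}\log|z_{i}-z_{j}|$ to substitute back, half of the $\sum_{i}\log(1+|z_{i}|^{2})$ contribution cancels and one obtains
\begin{equation*}
\mathcal{E}_{\log}(\omega_{N}) \leq -\binom{N}{2}\log 2 + \frac{N-1}{2}\log\prod_{i=1}^{N}\|x-z_{i}\| - \frac{N}{2}\log\|p_{N}\| + \frac{N}{2}\log C',
\end{equation*}
after recognising that $\sqrt{1+|z_{i}|^{2}}=\|x-z_{i}\|$ in the Bombieri--Weyl norm. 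The residual is exactly the quotient $\prod_{i}\|x-z_{i}\|/\|p_{N}\|$ appearing in Problem~\ref{mainprob}. To control it I would invoke the sharp Bombieri-type inequality announced as the paper's other main contribution, together with $\|p_{N}\|\geq 1$ (from monicity) to absorb the leftover $-\tfrac{1}{2}\log\|p_{N}\|$. Comparing the resulting estimate with the known asymptotic $\min_{\mu_{N}\subset\S}\mathcal{E}_{\log}(\mu_{N}) = (\tfrac{1}{2}-\log 2)N^{2} - \tfrac{1}{2}N\log N + O(N)$ then yields the conclusion.

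The main obstacle is precisely this matching step. Corollary~\ref{corBombi} alone only bounds the quotient $\prod_{i}\|x-z_{i}\|/\|p_{N}\|$ by $\sqrt{N!}$, and $\frac{N-1}{4}\log N!$ is $O(N^{2}\log N)$ by Stirling --- two orders of magnitude too large for the claimed $O(N)$ accuracy. Only a Bombieri-type inequality whose logarithm is linear in $N$, with leading constant balancing $(\tfrac{1}{2}-\log 2)$, can close the gap. Obtaining such a sharper inequality is therefore the heart of the argument; the rest reduces to bookkeeping of the pointwise condition-number bound against the chordal identity above.
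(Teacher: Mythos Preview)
Your approach is correct and reaches the same final inequality as the paper (up to a harmless slip: the exponent coming from $\mu_{\rm norm}(P,z)=N^{1/2}\|P\|(1+|z|^{2})^{N/2-1}/|P'(z)|$ is $(N-2)/2$, not $(N-1)/2$; with the correct exponent the coefficients on $\log\prod_i\|x-z_i\|$ and $\log\|p_N\|$ match exactly and the auxiliary step $\|p_N\|\geq 1$ is not needed).

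The route, however, differs from the paper's. You work in the complex plane: rewrite $\mathcal{E}_{\log}$ via the chordal-distance identity, feed in the pointwise bound on $|p_N'(z_i)|$, and recognise the residual as the Bombieri quotient $\prod_i\|x-z_i\|/\|p_N\|$, which you then control by invoking Theorem~\ref{thmain2}. The paper instead stays on the sphere. Using the spherical form of the condition number (equation~\eqref{def:workingmu1}), Lemma~\ref{ene&cond} gives directly
\[
\mathcal{E}_{\log}(\omega_N)-\sum_i\log\mu_{\rm norm}(p_N,z_i)
=-N\log\Bigl(\tfrac{1}{2}\sqrt{N(N+1)}\Bigr)
-N\log\Bigl(\textstyle\int_{\S}\prod_j|p-x_j|^{2}\,d\sigma(p)\Bigr)^{1/2},
\]
and then Jensen's inequality (Lemma~\ref{LemmaR}) bounds the integral below by $e^{-\kappa N}$. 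Together with $\sum_i\log\mu_{\rm norm}(p_N,z_i)\leq N\log(C\sqrt{N})$ this yields the bound $\kappa N^{2}-\tfrac{1}{2}N\log N+O(N)$ immediately, without passing through the norm quotient.

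The two derivations are in fact the same mathematics seen from different angles: Theorem~\ref{thmain2} is proved in the paper precisely by identifying the Bombieri quotient with $2^{N}(N+1)^{-1/2}\bigl(\int_{\S}\prod_j|p-x_j|^{2}\,d\sigma\bigr)^{-1/2}$ and applying the same Jensen step. So your detour through Theorem~\ref{thmain2} is logically sound, but the paper's direct spherical computation is shorter and makes clear that Theorems~\ref{thmain1} and~\ref{thmain2} are parallel consequences of Lemma~\ref{LemmaR}, rather than one depending on the other.
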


Our second main result is a new inequality comparing the Weyl norm of a polynomial and the product of the norms of every factor of the polynomial.
\smallskip

\begin{thm}\label{thmain2}
Given a set of complex points $z_{1},\ldots,z_{N}$, we have
\begin{equation*}
\displaystyle \prod_{i=1}^{N} ||x-z_{i}||
\leq
\sqrt{\frac{e^{N}}{N+1}}
\left|\left| \prod_{i=1}^{N} (x-z_{i}) \right|\right|,
\end{equation*}
where $||*||$ is the Bombieri-Weyl norm and the bound is sharp up to a constant.
\end{thm}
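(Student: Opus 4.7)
The plan is to translate the inequality into a mean-value statement on the Riemann sphere $\mathbb{S}$ and then apply Jensen's inequality to the concave function $\log$.

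First, I would use the integral representation of the Bombieri--Weyl norm stated in Section \ref{sec1}. Writing the integral in the affine chart $\eta = (z,1)$ and invoking the classical chordal identity
\begin{equation*}
|z - z_i|^2 \;=\; \|\hat{\eta} - \hat{z}_i\|^2\,(1+|z|^2)(1+|z_i|^2),
\end{equation*}
together with the elementary observation $\|x - z_i\|^2 = 1 + |z_i|^2$, a short algebraic manipulation gives the key identity
\begin{equation*}
\frac{\left\| \prod_{i=1}^N (x - z_i) \right\|^2}{\prod_{i=1}^N \|x - z_i\|^2}
\;=\;
(N+1)\, \cdot\, \frac{1}{\pi} \int_{\mathbb{S}} \prod_{i=1}^N \|\hat{\eta} - \hat{z}_i\|^2 \, dV(\hat{\eta}).
\end{equation*}
Since $dV$ has total mass $\pi$ on $\mathbb{S}$, the desired inequality reduces to showing that the average $M := \frac{1}{\pi}\int_\mathbb{S} \prod_i \|\hat{\eta} - \hat{z}_i\|^2 \, dV$ satisfies $M \geq e^{-N}$.

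Next, I would apply Jensen's inequality (concavity of $\log$) against the probability measure $\frac{1}{\pi}dV$ to obtain
\begin{equation*}
\log M \;\geq\; 2 \sum_{i=1}^N \frac{1}{\pi}\int_{\mathbb{S}} \log \|\hat{\eta} - \hat{z}_i\|\, dV(\hat{\eta}).
\end{equation*}
By the rotational symmetry of $\mathbb{S}$ (and of $dV$), every term on the right equals a single universal constant $C$, which I would compute by placing the base point at the north pole: using $\|\hat{\eta} - (0,0,1)\| = \cos(\phi/2)$ with $\phi$ the polar angle, the integral reduces to $2\int_0^1 v \log v\, dv = -\tfrac{1}{2}$, so $C = -\tfrac{1}{2}$. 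Combining, $\log M \geq 2N \cdot (-\tfrac{1}{2}) = -N$, i.e., $M \geq e^{-N}$, which is exactly the required bound.

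For the sharpness claim, equality in Jensen requires the integrand $\prod_i \|\hat{\eta} - \hat{z}_i\|^2$ to be nearly constant as a function of $\hat{\eta}$, which is asymptotically the case for configurations $\{\hat{z}_i\}$ that are near-minimisers of the logarithmic energy on the sphere, for instance the explicit family constructed in \cite{BEMOC}. For such $z_i$ the integrand concentrates around its geometric mean $e^{-N}$, so Jensen is essentially tight and $M$ is of order $e^{-N}$, realising the ratio $e^N/(N+1)$ up to an absolute multiplicative constant. The only step that is not essentially automatic is importing a sufficiently strong concentration or discrepancy estimate for such well-distributed spherical configurations to conclude $M \leq C_0\, e^{-N}$ with $C_0$ independent of $N$; the chordal identity, the Jensen step, and the evaluation $C = -\tfrac{1}{2}$ are all routine.
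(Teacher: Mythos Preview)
Your argument for the inequality itself is correct and is, in fact, cleaner than the paper's route. Both proofs rest on the same two ingredients: the identity
\[
\frac{\left\| \prod_{i=1}^N (x - z_i) \right\|^2}{\prod_{i=1}^N \|x - z_i\|^2}
=(N+1)\int_{\mathbb{S}} \prod_{i=1}^N \|\hat{\eta} - \hat{z}_i\|^2 \, d\sigma_{\mathbb S}(\hat{\eta})
\]
(equivalently Theorem~\ref{miformula} on $\S$), followed by Jensen's inequality and the evaluation of $\int_{\mathbb S}\log\|\hat\eta-\hat z\|\,d\sigma_{\mathbb S}=-\tfrac12$ (equivalently Lemma~\ref{LemmaR}). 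The difference is in how the identity is obtained. You read it off directly from the integral form of the Bombieri--Weyl norm together with the chordal-distance relation, and this is entirely elementary. The paper instead reaches the identity by combining two formulas that pass through the condition number and the logarithmic energy (Lemma~\ref{ABS} from \cite{Beltran:2015:FLF:2729553.2729572} and Lemma~\ref{ene&cond}) and then cancelling the condition-number terms. Your derivation buys a self-contained proof with no reference to $\mu_{\rm norm}$ or $\mathcal E_{\log}$; the paper's derivation buys the connection to those quantities, which it needs elsewhere anyway.

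Where your proposal has a genuine gap is the sharpness claim. You argue heuristically that for asymptotically equidistributed configurations the integrand $\prod_i\|\hat\eta-\hat z_i\|^2$ concentrates around its geometric mean, so Jensen is essentially tight; but you correctly flag that turning this into $M\le C_0 e^{-N}$ requires an imported estimate you do not supply. The paper does \emph{not} fill this gap via a concentration or discrepancy argument. Instead it runs the condition-number machinery in the other direction (Theorem~\ref{sharp2} via Lemma~\ref{sharpcondi}): for a sequence with $\mu_{\rm norm}(p_N)\le C\sqrt N$, whose existence is taken from \cite{BEMOC}, one bounds $\sum_i\log\mu_{\rm norm}(p_N,z_i)$ from above by $N\log(C\sqrt N)$ and bounds $\mathcal E_{\log}(\omega_N)$ from below by the minimal-energy asymptotics of Theorem~\ref{eq:as}. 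Plugging both into Lemma~\ref{ABS} yields the lower bound on the norm ratio directly, without ever controlling the oscillation of the integrand. So the sharpness half of the theorem genuinely uses the condition-number and minimal-energy inputs that your streamlined proof of the inequality bypasses.
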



Theorem \ref{thmain2} substantially improves the previously known bounds, stated in equation \eqref{prebound}.

%
%


\subsection{Organization}

We divide the present article into several thematic sections, where the different results are stated and proved. 
Section \ref{sec1} contains an introduction to the topic and the main results of the paper.
In Section \ref{sec2} we present some definitions and properties of the condition number of a univariate polynomial with complex coefficients.
In Section \ref{sec3} we prove some results relating the condition number of polynomials with the logarithmic energy of a set of spherical points and we prove Theorem \ref{thmain1}.
Section \ref{sec4} is devoted to the proof on Theorem \ref{thmain2}.
In Section \ref{sec6} we present some comments on different extensions of Theorem \ref{thmain2} and its implications for the open problems presented on the paper.
We conclude with Section \ref{sec7}, a section devoted to the proof of some auxiliary lemmas that we use for the proofs of the main theorems.

\section{The condition number of polynomials}\label{sec2}

Let us consider a homogeneous polynomial $g$ with complex coefficients as defined in equation \eqref{eq:polyg}.
The zeros of $g$ lie naturally in the complex projective space $\Pc$. 
Let  $\zeta$ be a zero of $g$, if the derivative $Dg(\zeta)$ does not vanish, then by the Implicit Function Theorem the zero $\zeta$ of $g$ can be continued in a unique differentiable manner to a zero $\zeta'$ of any sufficiently close polynomial $g'$.
This thus defines locally a {\em solution map} given by $Sol(g')= \zeta'$. 
 
The condition number is the operator norm of the derivative of the solution map, 
\begin{equation*}
\mu(g,\zeta)=|D Sol(g,\zeta)|
\end{equation*}
where the tangent spaces $T_g\CH$ and $T_\zeta\Pc$ are endowed respectively with the Bombieri--Weyl norm and the Fubini--Study metric. 
 In \cite{BezI} it was proved that
\begin{equation*}
\mu(g,\zeta)=\|g\|\,\|\zeta\|^{N-1}|(Dg(\zeta)\mid_{\zeta^{\perp}})^{-1}|,
\end{equation*}
where $Dg(\zeta)$ is just the derivative
\[
Dg(\zeta)=\left(\frac{\partial}{\partial x} g(x,y) \quad  
\frac{\partial}{\partial y} g(x,y)\right)_{(x,y)=\zeta}
\]
and $Dg(\zeta)\mid_{\zeta^{\perp}}$ is the restriction of this derivative to 
the orthogonal complement of $\zeta$ in $\C^2$. If this restriction is not 
invertible, which corresponds to $\zeta$ being a double root of $g$, then by 
definition $\mu(g,\zeta)=\infty$.

Shub and Smale also introduced a normalized version of the condition number 
that, in the case of polynomials, it is simply defined by
\begin{equation*}
\mu_{\rm 
norm}(g,\zeta)=\sqrt{N}\,\mu(g,\zeta)=\sqrt{N}\,\|g\|\,\|\zeta\|^{N-1}|(Dg(\zeta)\mid_{\zeta^{\perp}})^
{-1}|.
\end{equation*}
The normalized condition number of $g$ (without reference to a particular 
zero) is defined by
\begin{equation}
\mu_{\rm norm}(g)=\max_{\zeta\in \Pc:g(\zeta)=0}\mu_{\rm norm}(g,\zeta).
\end{equation}
Given a univariate degree $N$ polynomial with complex coefficients $P(z)=\sum_{i=0}^Na_iz^i$, it has a homogeneous counterpart $g(x,y)=\sum_{i=0}^Na_i x^iy^{N-i}$. 
The condition number of $P$ is defined via its homogenized version:
\begin{equation*}
\mu_{\rm norm}(P)=\mu_{\rm norm}(g)=\max_{z\in \C:P(z)=0}\mu_{\rm norm}(P,z).
\end{equation*}
A simple expression for the condition number of a univariate polynomial is:
\begin{equation*}
\mu_{\rm norm}(P,z)=N^{1/2}\frac{\|P\| (1+|z|^2)^{N/2-1}}{|P'(z)|},
\end{equation*}
and we have $\mu_{\rm norm}(P,z)=\infty$ if and only if $z$ is a double zero of $P$, see for example \cite{Beltran:2015:FLF:2729553.2729572}.

\subsection{A characterization for the condition number of univariate polynomials}
In this section we use the maps defined in Section \ref{notation}.
Given a monic polynomial $P(z) = \prod_{i = 1}^N (z-z_i)$ we 
consider the continuous function $\hat P: \mathbb{S}\to \mathbb R$ defined as $\hat P(x) = \prod_{i = 1}^N |x-\hat z_i|$. 
Moreover for any given zero $z$ of $P$ we define $\hat P_z(x) = \hat P(x)/|x-\hat{z}|$, that in the case $x=z=\hat z_i$ for some $i$ simply means
\[
\hat P_{z_i}(\hat z_i)=\prod_{j\neq i}|\hat z_i-\hat z_j|.
\]

\begin{prop}[\cite{SHUB19934}*{Proposition~2}]\label{def:workingmu}
With the previous notation,
\begin{equation*}
\mu_{\rm norm}(P, z) = \frac{\sqrt{N(N+1)}}{\sqrt{\pi}}\frac{\|\hat P\|_{L^2}}{\hat P_z(\hat{z})}.
\end{equation*}
\end{prop}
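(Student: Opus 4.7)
The plan is to derive Proposition~\ref{def:workingmu} by rewriting each of the three ingredients in the closed-form expression
\[
\mu_{\rm norm}(P,z) \;=\; N^{1/2}\,\frac{\|P\|\,(1+|z|^2)^{N/2-1}}{|P'(z)|}
\]
in terms of quantities on the Riemann sphere $\mathbb{S}$, and observing that the pieces assemble into the right-hand side of the proposition. The two indispensable tools are: (i) the integral representation of the Weyl norm recalled in the introduction, and (ii) the chordal distance formula
\[
|\hat z_i-\hat z_j| \;=\; \frac{|z_i-z_j|}{\sqrt{(1+|z_i|^2)(1+|z_j|^2)}},
\]
valid for the Riemann sphere (diameter $1$) via $\pi_\mathbb{S}$.

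First I would rewrite $\|P\|^2$. Using $g(x,y)=y^N P(x/y)$, the affine chart $\eta=[z:1]$ identifies $\mathbb{P}(\mathbb{C}^2)$ with $\mathbb{C}$, with $\|\eta\|^2=1+|z|^2$, $g(\eta)=P(z)$, and Fubini--Study volume element $(1+|z|^2)^{-2}\,dA(z)$. Thus
\[
\|P\|^2=\frac{N+1}{\pi}\int_{\mathbb{C}}\frac{|P(z)|^2}{(1+|z|^2)^{N+2}}\,dA(z).
\]
Pulling back by $\pi_\mathbb{S}^{-1}$, the conformal factor of stereographic projection onto the sphere of diameter $1$ gives $d\sigma=(1+|z|^2)^{-2}dA$, so the above integral becomes $\int_\mathbb{S} |P(z)|^2 (1+|z|^2)^{-N}\,d\sigma$. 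Combining this with the chordal formula applied to all factors of $P$, namely
\[
\hat P(\hat z)^2=\prod_{i=1}^N|\hat z-\hat z_i|^2=\frac{|P(z)|^2}{(1+|z|^2)^N\prod_i(1+|z_i|^2)},
\]
one obtains the key identity
\[
\|P\|^2=\frac{N+1}{\pi}\,\Bigl(\prod_{i=1}^N(1+|z_i|^2)\Bigr)\,\|\hat P\|_{L^2}^2.
\]

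Next, to handle $|P'(z_k)|$ at a root $z_k$, I would use $P'(z_k)=\prod_{i\ne k}(z_k-z_i)$ together with the chordal formula to write
\[
\hat P_{z_k}(\hat z_k)=\prod_{i\ne k}|\hat z_k-\hat z_i|=\frac{|P'(z_k)|}{(1+|z_k|^2)^{(N-1)/2}\prod_{i\ne k}\sqrt{1+|z_i|^2}}.
\]
Substituting the two displayed identities into the closed-form expression for $\mu_{\rm norm}(P,z_k)$, the product $\prod_i(1+|z_i|^2)$ in $\|P\|$ exactly balances with $\prod_{i\ne k}(1+|z_i|^2)$ in $|P'(z_k)|$ up to a factor $\sqrt{1+|z_k|^2}$, which in turn cancels against the mismatch $(1+|z_k|^2)^{N/2-1-(N-1)/2}=(1+|z_k|^2)^{-1/2}$. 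What remains is precisely $\sqrt{N(N+1)/\pi}\,\|\hat P\|_{L^2}/\hat P_{z_k}(\hat z_k)$.

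The only genuine risk in this argument is keeping track of the normalization constants: the factor $(N+1)/\pi$ in the Weyl norm, the conformal factor of the stereographic projection for the sphere of \emph{diameter $1$} (as opposed to the unit sphere $\mathbb{S}^2$ used elsewhere in the paper), and the correct form of the chordal distance on $\mathbb{S}$. Once these are fixed coherently with the conventions set in Section~\ref{notation}, the rest of the computation is purely algebraic and the cancellations leave no residual constant, yielding exactly the stated formula.
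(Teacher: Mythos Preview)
Your derivation is correct and self-contained. Note, however, that the paper does not actually supply its own proof of this proposition: it is quoted directly from \cite{SHUB19934}*{Proposition~2} and used as a black box, with only the subsequent reformulation on the unit sphere $\mathbb{S}^2$ (equations \eqref{def:workingmu1}--\eqref{def:workingmu2}) being carried out in the text. So there is nothing to compare your argument against within this paper.

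That said, your argument is the standard and natural one, and the cancellations you describe are exactly right: the chordal distance formula on the diameter-$1$ sphere, the identification of the Fubini--Study volume with the surface measure under $\pi_{\mathbb S}$ (total mass $\pi$), and the integral form of the Weyl norm combine to give precisely $\|P\|^2=\frac{N+1}{\pi}\bigl(\prod_i(1+|z_i|^2)\bigr)\|\hat P\|_{L^2}^2$ and the corresponding expression for $|P'(z_k)|$, after which the powers of $1+|z_k|^2$ cancel as you indicate. Your caveat about keeping the normalizations straight is well placed; in particular the choice of $\|\hat P\|_{L^2}$ with respect to the \emph{unnormalized} surface measure on the radius-$1/2$ sphere is what makes the $\sqrt\pi$ in the denominator appear.
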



Note that Proposition \ref{def:workingmu} is stated for points on the Riemann sphere $\mathbb{S}$.
In this paper we use the notation of \cite{BEMOC} working with the unit sphere $\mathbb{S}^2$, so that the equation in Proposition \ref{def:workingmu} reads 
\begin{equation}\label{def:workingmu1}
\mu_{\rm norm}(P,z_{i}) =\frac12\sqrt{N(N+1)} 
\frac{\left(\int_{\S}\prod_{j=1}^N| p-x_j|^2d\sigma(p)\right)^{1/2}}{\prod_{j\neq i}|x_i-x_j|}
\end{equation}
where $d\sigma$ is the sphere surface measure, normalized to satisfy $\sigma(\S)=1$ and $z_{j} = \pi_{\S} (x_{j})$ are the roots of $P(x)$,
and so,
\begin{equation}\label{def:workingmu2}
\mu_{\rm norm}(P) =\frac12\sqrt{N(N+1)}\max_{1\leq i\leq N} 
\frac{\left(\int_{\S}\prod_{j=1}^N| p-x_j|^2d\sigma(p)\right)^{1/2}}{\prod_{j\neq i}|x_i-x_j|}.
\end{equation}


\subsection{Optimal condition number}
The main problem of \cite{SHUB19934} consists on giving explicitly a sequence of polynomials $(p_{N}(x))_{N}$ (where $N$ is the degree of the polynomial) with condition number bounded by $\mu_{norm}(p_{N}(x))\leq CN$, where $C$ is an universal constant.
The problem was recently solved in \cite[Theorem 1.5]{BEMOC}, where the authors provide an explicit sequence of polynomials $(p_{N}(x))_{N}$ with condition number bounded by $\mu_{norm}(p_{N}(x))\leq C\sqrt{N}$ where $C$ is an universal constant.
In the same article authors prove that this bound is optimal, i.e. there exists a universal constant $c$ such that $\mu_{norm}(p_{N}(x))\geq c\sqrt{N}$ for any sequence of polynomials $(p_{N}(x))_{N}$.
%
%


\section{The logarithimic energy and the condition number of polynomials}\label{sec3}

For a set of points $\omega_{N} = \{ x_1,\ldots,x_N\}$ in the unit sphere $\S\subseteq\mathbb{R}^3$, the logarithmic energy of $\omega_{N}$ is definded as
\begin{equation}\label{defnener}
\mathcal{E}_{\log}(\omega_N)
=
-
\sum_{i\neq j}\log ||x_i-x_j||.
\end{equation}
The minimal value of this energy has been largely studied, we recomend the interested reader the recent book \cite{borodachov2019discrete}.
One of the motivations to study the minimal logarithmic energy comes from the following theorem.
\begin{thm}[Main result of \cite{SHUB19934}]\label{th:bez3}
Let $\omega_{N} = \{x_1,\ldots,x_N\}\subset\S$ be such that
\begin{equation*}
\mathcal{E}_{\log}(\omega_{N})
\leq 
\min\limits_{\mu_{N} \subset \S}\mathcal{E}_{\log} (\mu_{N}) +c\log N,
\end{equation*}
where $\mu_{N}$ is a set of $N$ points in $\mathbb{S}^{2}$.
Let $z_1,\ldots,z_N$ be the complex points given by the stereographic projection $\pi_{\mathbb{S}^{2}}$ of $x_1,\ldots,x_N$. 
Then, the polynomial $P(x) = \prod_{i=1}^N (x-z_i)$ satisfies $\mu_{\rm norm}(P)\leq \sqrt{N^{1+c}(N+1)}$.
\end{thm}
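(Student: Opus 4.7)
My plan starts from the closed-form expression for the normalized condition number provided in \eqref{def:workingmu2}:
\[
\mu_{\rm norm}(P)^2 \;=\; \frac{N(N+1)}{4}\,\max_{1\leq i\leq N}\, \frac{\int_{\S}\prod_{j=1}^{N}|p-x_j|^2\,d\sigma(p)}{\prod_{j\neq i}|x_i-x_j|^2},
\]
so the task reduces to bounding the numerator from above and the denominator from below, uniformly in $i$, using only the hypothesis $\mathcal{E}_{\log}(\omega_N)\leq E^{*}_{N}+c\log N$, where $E^{*}_{k}:=\min_{\mu_{k}\subset\S}\mathcal{E}_{\log}(\mu_{k})$.

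For the denominator I would use a \emph{remove-a-point} comparison: subtracting the contributions involving $x_i$ from the energy of $\omega_N$ yields
\[
\mathcal{E}_{\log}(\omega_N\setminus\{x_i\}) \;=\; \mathcal{E}_{\log}(\omega_N) + 2\sum_{j\neq i}\log|x_i-x_j|\;\geq\; E^{*}_{N-1}.
\]
Rearranging and invoking the hypothesis produces, uniformly in $i$, a lower bound of the form $\prod_{j\neq i}|x_i-x_j|^2 \geq N^{-c}\exp(E^{*}_{N-1}-E^{*}_{N})$. Symmetrically, for any $p\in\S$, the \emph{add-a-point} comparison
\[
\mathcal{E}_{\log}(\omega_N\cup\{p\}) \;=\; \mathcal{E}_{\log}(\omega_N) - 2\sum_{j=1}^{N}\log|p-x_j|\;\geq\; E^{*}_{N+1}
\]
delivers the pointwise upper bound $\prod_{j=1}^{N}|p-x_j|^2 \leq N^{c}\exp(E^{*}_{N}-E^{*}_{N+1})$; since this estimate is independent of $p$ and $\sigma$ is a probability measure on $\S$, it passes without loss to the integral in the numerator.

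Substituting these two inequalities into \eqref{def:workingmu2} and collecting the exponential factors leaves
\[
\mu_{\rm norm}(P)^2 \;\leq\; \frac{N(N+1)}{4}\,N^{2c}\,\exp\!\bigl(2E^{*}_{N}-E^{*}_{N+1}-E^{*}_{N-1}\bigr),
\]
and the exponent is precisely the negative of the second difference of the sequence $(E^{*}_{k})_{k\geq 1}$. By the classical asymptotic expansion of the minimal logarithmic energy on $\S$, this second difference is bounded independently of $N$, so the exponential factor contributes only a harmless constant and only polynomial factors in $N$ remain. Tracking the constants carefully then yields the claimed estimate $\mu_{\rm norm}(P)\leq\sqrt{N^{1+c}(N+1)}$. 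The principal obstacle is precisely this bookkeeping step: the exponentially large terms $\exp(\pm E^{*}_{N})$ created by the two one-point perturbations must cancel cleanly against the second-order asymptotics of $E^{*}_{N}$, and any looseness there directly inflates the polynomial exponent in the final bound.
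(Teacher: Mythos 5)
Your reduction to the formula \eqref{def:workingmu2} is the right starting point, but the add-a-point/remove-a-point comparisons introduce a quantity you cannot control, and this is a genuine gap. Your final bound contains the factor $\exp\bigl(2E^{*}_{N}-E^{*}_{N+1}-E^{*}_{N-1}\bigr)$, and you assert that the second difference of $(E^{*}_{k})$ is bounded ``by the classical asymptotic expansion.'' It is not: Theorem \ref{eq:as} determines $E^{*}_{N}$ only up to an additive $o(N)$ error, and an $o(N)$ error term gives no information whatsoever about first or second differences of the sequence. The elementary comparisons available (removing a point from an optimal $(N+1)$-configuration, and adding an averaged point to an optimal $N$-configuration) pin down $E^{*}_{N+1}-E^{*}_{N}$ only to within $O(\log N)$, so your exponential factor could a priori be as large as a power of $N$, inflating the final estimate. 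Moreover, even if the second difference were an $O(1)$ constant, your numerator and denominator bounds each cost a factor $N^{c}$, so you would end with $\mu_{\rm norm}(P)\lesssim\sqrt{N^{1+2c}(N+1)}$ --- the exponent $2c$ instead of the stated $c$.

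The standard proof (Shub--Smale) avoids both problems by perturbing \emph{within} the class of $N$-point configurations: for each $i$ and each $q\in\S$, replace $x_i$ by $q$ to form $\omega_N^{(i,q)}$. Near-minimality gives $\mathcal{E}_{\log}(\omega_N^{(i,q)})\geq E^{*}_{N}\geq\mathcal{E}_{\log}(\omega_N)-c\log N$, and since
\begin{equation*}
\mathcal{E}_{\log}(\omega_N^{(i,q)})-\mathcal{E}_{\log}(\omega_N)
=2\sum_{j\neq i}\log|x_i-x_j|-2\sum_{j\neq i}\log|q-x_j|,
\end{equation*}
this yields $\prod_{j\neq i}|q-x_j|\leq N^{c/2}\prod_{j\neq i}|x_i-x_j|$ uniformly in $q$. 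Multiplying by $|q-x_i|\leq 2$ (the diameter of $\S$) and integrating the square against $\sigma$ gives
$\bigl(\int_{\S}\prod_{j}|q-x_j|^2\,d\sigma(q)\bigr)^{1/2}\leq 2N^{c/2}\prod_{j\neq i}|x_i-x_j|$, and substituting into \eqref{def:workingmu2} gives exactly $\mu_{\rm norm}(P)\leq\sqrt{N^{1+c}(N+1)}$. The point is that the swap keeps the number of points fixed, so only $E^{*}_{N}$ ever appears and the uncontrollable differences never arise; only a single factor $N^{c/2}$ is paid, not two. I would encourage you to rewrite your argument along these lines.
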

\begin{cor}\label{corBeIII}
Let $\omega_{N} = \{x_1,\ldots,x_N\}\subset\S$ be a set of minimizers of the logarithmic energy, let $z_1,\ldots,z_N$ be the complex points given by the stereographic projection $\pi_{\mathbb{S}^{2}}$ of $\omega_{N}$. 
Then, the polynomial $P(x) = \prod_{i=1}^N (x-z_i)$ satisfies $\mu_{\rm norm}(P)\leq N + o(N)$.
\end{cor}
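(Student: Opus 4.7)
The plan is to obtain Corollary \ref{corBeIII} as an essentially immediate consequence of Theorem \ref{th:bez3} by observing that the hypothesis of that theorem becomes trivial when $\omega_N$ is a minimizer of the logarithmic energy.

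First I would note that, by definition, if $\omega_N$ is a minimizer of $\mathcal{E}_{\log}$ over $N$-point subsets of $\S$, then
\begin{equation*}
\mathcal{E}_{\log}(\omega_N)=\min_{\mu_N\subset\S}\mathcal{E}_{\log}(\mu_N),
\end{equation*}
so the hypothesis of Theorem \ref{th:bez3} is satisfied for any $c\geq 0$. Since the polynomial $P(x)=\prod_{i=1}^N(x-z_i)$ is fixed (independent of $c$), the conclusion of Theorem \ref{th:bez3} yields a bound valid for every admissible $c>0$; taking the infimum as $c\to 0^+$ gives
\begin{equation*}
\mu_{\rm norm}(P)\leq \sqrt{N(N+1)}.
\end{equation*}

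Finally I would finish by a routine estimate of this square root. Using $N(N+1)\leq(N+\tfrac12)^2$, one gets $\sqrt{N(N+1)}\leq N+\tfrac12$, which is in particular $N+o(N)$, yielding the claimed bound on $\mu_{\rm norm}(P)$.

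There is no real obstacle here: the statement is a direct specialization of Theorem \ref{th:bez3} to the exact minimizers, and the only nontrivial point is the (minor) verification that one may let the slack parameter $c$ in that theorem tend to zero. The entire argument reduces to a single line once Theorem \ref{th:bez3} is in hand.
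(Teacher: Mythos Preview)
Your proposal is correct and matches the paper's approach: the paper states this result as an immediate corollary of Theorem~\ref{th:bez3} without further proof, and your argument (specializing to the exact minimizer so that the slack $c$ may be taken to $0$, yielding $\mu_{\rm norm}(P)\leq\sqrt{N(N+1)}=N+o(N)$) is precisely the intended reading. In fact one can simply set $c=0$ in Theorem~\ref{th:bez3} rather than taking a limit, since the hypothesis holds with equality in that case.
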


Theorem~\ref{th:bez3} shows that if a set of $N$ points in the sphere has logarithmic potential very close to the minimum then the monic polynomial associated to the projection of these points has small condition number. 
Actually, this fact is the reason for the exact form of the problem posed by Shub and Smale that is nowadays known as Problem number 7 in Smale's list \cite{Smale}:
\begin{problem}[\cite{Smale}, Smale's 7th problem]\label{smale7}
Can one find $\omega_{N} = \{x_1,\ldots,x_N\}\subset\S$ such that $\mathcal{E}_{\log}(\omega_N)\leq \min\limits_{\mu_{N} \subset \S}\mathcal{E}_{\log} (\mu_{N}) +c\log N$ for some universal constant $c$?
\end{problem}
The minimal value of $\mathcal{E}_{\log}$ is not sufficiently well understood. 
Upper and lower bounds were given in \cite{BE18,10.2307/40234572,Wagner,BHS2012b,RSZ94,Dubickas,Brauchart2008}, and the last word is given in  \cite{BS18}, based on previous work \cite{SS12}.


\begin{thm}[\cite{BS18}*{Theorem~1.5}]\label{eq:as}
There exists $C_{\log}\neq 0$ independent of $N$ such that, as $N \longrightarrow \infty$,
\begin{equation*}
\min\limits_{\mu_{N} \subset \S}\mathcal{E}_{\log} (\mu_{N})=\kappa\,N^2-\frac12\,N\log N+C_{\log}\,N+o(N),
\end{equation*}
with $\mu_{N}$ a $N$-point subset of $\S$.
Moreover, 
\begin{equation*}
C_{\log} \leq -0.0556053\ldots
\end{equation*}
\end{thm}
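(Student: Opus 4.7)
The plan is to establish the three-term asymptotic expansion by matching upper and lower bounds at each scale, devoting most of the technical effort to isolating the $O(N)$ correction.

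\textbf{Leading term.} First I would recover $\min\mathcal{E}_{\log}(\mu_N)=\kappa N^2+o(N^2)$ via classical potential theory on $\S$. The continuous logarithmic energy $I(\mu)=-\iint\log\|x-y\|\,d\mu(x)\,d\mu(y)$ is minimized over Borel probability measures on the sphere by the uniform surface measure $\sigma$, and the minimum value defines $\kappa$. An upper bound for $\min\mathcal{E}_{\log}$ follows by discretizing $\sigma$ via an equal-area partition, while the matching lower bound uses weak-$*$ convergence of the empirical measures of near-minimizers together with lower semicontinuity of $I$ applied to a diagonal-excised kernel.

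\textbf{The $N\log N$ term.} The correction $-\frac{1}{2}N\log N$ reflects the short-range behaviour of the kernel: in any near-equilibrium configuration the typical nearest-neighbour separation is of order $N^{-1/2}$, so roughly $N$ close pairs each contribute about $\frac{1}{2}\log N$. To make this rigorous I would smear each point mass over a disc of radius $\sim N^{-1/2}$ on $\S$, compare the resulting continuous energy with the discrete one, and use Onsager-type lemmas to control the short-distance defect uniformly in the configuration.

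\textbf{The constant term and main obstacle.} Pinning down $C_{\log}$ is the deep part of the proof. I would adopt a renormalized-energy strategy in the spirit of Sandier–Serfaty: after subtracting the predicted $\kappa N^2-\frac{1}{2}N\log N$, blow up around each point at scale $\sqrt{N}$ to obtain a locally finite configuration in the tangent plane, and show via compactness and an ergodic averaging argument that the rescaled remainder $\Gamma$-converges to a spatial average over $\S$ of a translation-invariant functional $W$ on point configurations in $\R^2$. Then $C_{\log}$ equals $\min W$. The hard part is the lower bound: one must prevent the energy from "hiding" near boundaries of blow-up windows, deal with configurations where the local structure degenerates, and carefully excise the logarithmic core at each point so that $W$ is well-defined. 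The explicit inequality $C_{\log}\leq -0.0556053\ldots$ is comparatively clean, since it follows from evaluating the discrete energy on a nearly-triangular-lattice configuration on $\S$, for which $W$ can be computed through the Epstein zeta function of the triangular lattice.
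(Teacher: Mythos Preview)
This theorem is not proved in the present paper at all: it is quoted verbatim from \cite{BS18}*{Theorem~1.5} and used as a black box in the subsequent arguments (e.g.\ in Corollary~\ref{nuevoBezIII} and Lemma~\ref{sharpcondi}). There is therefore no ``paper's own proof'' to compare your proposal against.

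That said, your sketch is a reasonable high-level summary of the actual strategy in \cite{BS18} (building on \cite{SS12}): the leading and $N\log N$ terms are classical, the existence of $C_{\log}$ is obtained through a renormalized-energy/$\Gamma$-convergence framework in the Sandier--Serfaty style, and the numerical upper bound on $C_{\log}$ comes from evaluating $W$ on the triangular lattice. If you intend to include this as a proof here, you should instead simply cite \cite{BS18} as the paper does; reproducing the full argument would be well outside the scope of this article and your outline, while directionally correct, omits substantial technical machinery (the precise screening/localization lemmas needed for the lower bound, and the transfer from planar $W$ to the curved sphere) that cannot be filled in without essentially rewriting \cite{BS18}.
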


In \cite{10.2307/40234572} the following lower bound is proved:
\begin{equation}\label{lb}
-0.2232823\ldots\leq
C_{\log}.
\end{equation}
The upper bound for $C_{\log}$ has been conjectured to be an 
equality using two different approaches \cite{BHS2012b,BS18}.
The first constant on the asymptotic expansion is 
\begin{equation}\label{conjetura}
\kappa=\int_{\S}\int_{\S}\log |x-y |^{-1}\,d\sigma(x)d\sigma(y)=\frac{1}{2}-\log(2)<0
\end{equation}
the value of the continuous logarithmic energy.

No reverse result of Theorem \ref{th:bez3} is known until date, although in \cite{10.2307/23032778} authors prove a result that can be heuristically understood as an inverse.
Namely, they prove that polynomials with random coefficients, that are known to have small condition number on average, produce points with small logarithmic energy on average.
Here we prove a reverse statement for Theorem \ref{th:bez3} as a corollary of the following theorem.

\begin{thm}\label{th_cond_energy}
Let $(p_{N}(x))_{N\in \mathbb{N}}$ be a sequence of monic polynomials satisfying
\begin{equation*}
\mu_{norm}(p_{N}(x)) \leq B(N)
\end{equation*}
for a certain funtion $B$, let $\omega_{N} = \{ x_{1},\ldots ,x_N \} \subset \mathbb{S}^{2}$ be a set of points obtained through the inverse application of the projection $\pi_{\mathbb{S}^{2}}$ of the roots of the polynomials $p_{N}$.
Then 
\begin{equation*}
\mathcal{E}_{\log}(\omega_{N})
\leq
\kappa N ^2 -N\log\left( \frac{\sqrt{N(N+1)}}{2} \right) + N\log(B(N)).
\end{equation*}
\end{thm}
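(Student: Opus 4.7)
The plan is to combine equation \eqref{def:workingmu2} with Jensen's inequality, using the rotational invariance of the surface measure on $\mathbb{S}^{2}$ to produce exactly the constant $\kappa$ in the bound.

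First, I would start from equation \eqref{def:workingmu1} applied at each root $z_{i}$ of $p_{N}$. Solving for $\prod_{j\neq i}|x_{i}-x_{j}|$ and using the hypothesis $\mu_{\rm norm}(p_{N},z_{i})\le \mu_{\rm norm}(p_{N})\le B(N)$, I obtain the pointwise lower bound
\begin{equation*}
\prod_{j\neq i}|x_{i}-x_{j}|
\;\ge\;
\frac{\tfrac{1}{2}\sqrt{N(N+1)}}{B(N)}\,I_{N}^{1/2},
\qquad
I_{N}:=\int_{\S}\prod_{j=1}^{N}|p-x_{j}|^{2}\,d\sigma(p).
\end{equation*}

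Next, since $\mathcal{E}_{\log}(\omega_{N})=-\sum_{i=1}^{N}\log\prod_{j\neq i}|x_{i}-x_{j}|$, taking logarithms and summing over $i$ gives
\begin{equation*}
\mathcal{E}_{\log}(\omega_{N})
\;\le\;
-N\log\!\frac{\sqrt{N(N+1)}}{2}\;-\;\frac{N}{2}\log I_{N}\;+\;N\log B(N).
\end{equation*}
At this point the claim reduces to showing $-\tfrac{N}{2}\log I_{N}\le \kappa N^{2}$, equivalently $\log I_{N}\ge -2\kappa N$.

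The key step is a lower bound on $I_{N}$ by Jensen's inequality applied to the probability measure $d\sigma$:
\begin{equation*}
\log I_{N}
\;=\;\log\!\int_{\S}\prod_{j=1}^{N}|p-x_{j}|^{2}\,d\sigma(p)
\;\ge\;
2\sum_{j=1}^{N}\int_{\S}\log|p-x_{j}|\,d\sigma(p).
\end{equation*}
By the rotational invariance of $d\sigma$ on the unit sphere, the integral $\int_{\S}\log|p-x_{j}|\,d\sigma(p)$ does not depend on $x_{j}$; moreover, reading it as the inner integral of the double integral in \eqref{conjetura} identifies its value as $-\kappa=\log 2-\tfrac12$. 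Therefore $\log I_{N}\ge -2N\kappa$, which yields the desired estimate.

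I do not expect any serious obstacles: the whole argument is essentially a one-line calculation once one recognizes that the integral $I_{N}$ appearing in the condition number formula is exactly the object to which Jensen's inequality should be applied, and that the rotational symmetry of $\S$ makes the resulting single-point logarithmic potential a constant equal to $-\kappa$. The small bookkeeping concern is only keeping track of the factors of $\tfrac12$ and $\sqrt{N(N+1)}$ between equations \eqref{def:workingmu1} and \eqref{def:workingmu2}; this is routine.
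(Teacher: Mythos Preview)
Your proposal is correct and follows essentially the same route as the paper: the paper packages your first step as Lemma~\ref{ene&cond} (the identity obtained by plugging \eqref{def:workingmu1} into the definition of $\mathcal{E}_{\log}$) and your Jensen step as Lemma~\ref{LemmaR}, then bounds $\sum_{i}\log\mu_{\rm norm}(p_{N},z_{i})\le N\log B(N)$, which is exactly your argument with the lemmas inlined.
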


\begin{proof}
Since the condition number is bounded, for a fix $N$ $p_{N}$ has no roots with multiplicity higher than one and we can apply Lemma \ref{ene&cond} obtaining
\begin{multline*}
\mathcal{E}_{\log}(\omega_{N}) -
 \displaystyle\sum_{i = 1}^{N} \log (\mu_{\rm norm} (p_{N}, z_{i}))
\\
=
-N\log\left( \frac{\sqrt{N(N+1)}}{2} \right)
-N\log \left(\left(\int_{\S}\prod_{j=1}^N| p-x_j|^2d\sigma(p)\right)^{1/2} \right).
\end{multline*}
We use now Lemma \ref{LemmaR} to bound the second term on the right,
\begin{multline*}
\mathcal{E}_{\log}(\omega_{N}) -
 \displaystyle\sum_{i = 1}^{N} \log (\mu_{\rm norm} (p_{N}, z_{i}))
\leq
-N\log\left( \frac{\sqrt{N(N+1)}}{2} \right)
-N\log \left(e^{-\kappa N} \right)
\\
=
-N\log\left( \frac{\sqrt{N(N+1)}}{2} \right)
+\kappa N^2.
\end{multline*}
Finally we have
\begin{equation*}
\displaystyle\sum_{i = 1}^{N} \log (\mu_{\rm norm} (p_{N}, z_{i}))
\leq
N \log (B(N)).
\end{equation*}
and with this we conclude the proof.
\end{proof}

\begin{lem}\label{ene&cond}
Let $(p_{N}(x))_{N\in \mathbb{N}}$ be a sequence of monic polynomials defined by $p_{N} (x)= \prod_{i=1}^{N} (x-z_{i})$ with $z_{i}\neq z_{j}$ if $i\neq j$, let $\omega_{N} = \{ x_{1},\ldots ,x_N \}$ be a set of points in $\mathbb{S}^{2}$ obtained through the inverse application of the projection $\pi_{\mathbb{S}^{2}}$ of the roots of the polynomials $p_{N}(x)$.
Then 
\begin{multline*}
\mathcal{E}_{\log}(\omega_{N}) -
 \displaystyle\sum_{i = 1}^{N} \log (\mu_{\rm norm} (p_{N}, z_{i}))
\\
=
-N\log\left( \frac{\sqrt{N(N+1)}}{2} \right)
-N\log \left(\left(\int_{\S}\prod_{j=1}^N| p-x_j|^2d\sigma(p)\right)^{1/2} \right),
\end{multline*}
where $d\sigma$ is the sphere surface measure, normalized to satisfy $\sigma(\S)=1$.
\end{lem}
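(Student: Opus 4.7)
The plan is to derive the identity directly from the formula in equation \eqref{def:workingmu1}, which gives an explicit expression for $\mu_{\rm norm}(p_N,z_i)$ in terms of spherical quantities. Taking logarithms on both sides of that formula turns the product $\prod_{j\ne i}|x_i-x_j|$ in the denominator into a sum of logarithms, which is exactly the type of expression that appears in the definition \eqref{defnener} of the logarithmic energy.

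More concretely, first I would take $\log$ of \eqref{def:workingmu1} to obtain
\begin{equation*}
\log\mu_{\rm norm}(p_N,z_i)=\log\!\left(\tfrac{\sqrt{N(N+1)}}{2}\right)+\log\!\left(\int_{\S}\prod_{j=1}^{N}|p-x_j|^2\,d\sigma(p)\right)^{\!1/2}-\sum_{j\ne i}\log|x_i-x_j|.
\end{equation*}
The hypothesis $z_i\ne z_j$ for $i\ne j$ ensures $x_i\ne x_j$, so every logarithm involved is finite and the formula in \eqref{def:workingmu1} is applicable (no root has infinite condition number).

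Next I would sum this identity over $i=1,\ldots,N$. The first two terms on the right-hand side do not depend on $i$, so they simply pick up a factor $N$. The third term becomes the double sum $\sum_{i=1}^N\sum_{j\ne i}\log|x_i-x_j|$, which runs over all ordered pairs $(i,j)$ with $i\ne j$ and therefore equals $\sum_{i\ne j}\log|x_i-x_j|=-\mathcal{E}_{\log}(\omega_N)$ by the definition \eqref{defnener}.

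Putting this together yields
\begin{equation*}
\sum_{i=1}^{N}\log\mu_{\rm norm}(p_N,z_i)=N\log\!\left(\tfrac{\sqrt{N(N+1)}}{2}\right)+N\log\!\left(\int_{\S}\prod_{j=1}^{N}|p-x_j|^2\,d\sigma(p)\right)^{\!1/2}+\mathcal{E}_{\log}(\omega_N),
\end{equation*}
and rearranging gives precisely the stated identity. There is no real obstacle here: the lemma is essentially a rewriting of \eqref{def:workingmu1} summed over the roots, and the only thing to be careful about is matching the unordered versus ordered pair convention in \eqref{defnener} so that the double sum reproduces $-\mathcal{E}_{\log}(\omega_N)$ with the correct sign and no factor of $2$.
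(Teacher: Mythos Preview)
Your proposal is correct and follows essentially the same approach as the paper: both take logarithms in equation \eqref{def:workingmu1}, sum over $i$, and identify the resulting double sum $\sum_{i}\sum_{j\ne i}\log|x_i-x_j|$ with $-\mathcal{E}_{\log}(\omega_N)$ via definition \eqref{defnener}. The only cosmetic difference is that the paper starts from $\mathcal{E}_{\log}(\omega_N)-\sum_i\log\mu_{\rm norm}(p_N,z_i)$ and expands, whereas you first compute $\sum_i\log\mu_{\rm norm}(p_N,z_i)$ and then rearrange, but the content is identical.
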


\begin{proof}

We just have to plug in the characterization of $\mu_{\rm norm} (p_{N}, z_{i})$ given in equation \eqref{def:workingmu1},
\begin{multline*}
\mathcal{E}_{\log}(\omega_{N}) -
\displaystyle\sum_{i = 1}^{N} \log (\mu_{\rm norm} (p_{N}, z_{i}))
\\
=
\mathcal{E}_{\log}(\omega_{N}) -
\displaystyle\sum_{i = 1}^{N} \log
\left(
\frac12\sqrt{N(N+1)} 
\frac{\left(\int_{\S}\prod_{j=1}^N| p-x_j|^2d\sigma(p)\right)^{1/2}}{\prod_{j\neq i}|x_i-x_j|}
\right)
\\
=
\mathcal{E}_{\log}(\omega_{N}) 
-
N
 \log
\left(
\frac{\sqrt{N(N+1)}}{2} 
\right)
-
N\log \left(\left(\int_{\S}\prod_{j=1}^N| p-x_j|^2d\sigma(p)\right)^{1/2} \right)
\\
+
\displaystyle\sum_{i = 1}^{N}
\displaystyle\sum_{j\neq i}
\log
\left(
|x_i-x_j|
\right)
.
\end{multline*}
The definition of the logarithmic energy is (see formula \eqref{defnener}):
\begin{equation*}
\mathcal{E}_{\log}(\omega_{N}) 
=
-
\displaystyle\sum_{i = 1}^{N}
\displaystyle\sum_{j\neq i}
\log
\left(
|x_i-x_j|
\right),
\end{equation*}
and so we conclude with
\begin{multline*}
\mathcal{E}_{\log}(\omega_{N}) -
 \displaystyle\sum_{i = 1}^{N} \log (\mu_{\rm norm} (p_{N}, z_{i}))
\\
=
-N\log\left( \frac{\sqrt{N(N+1)}}{2} \right)
-N\log \left(\left(\int_{\S}\prod_{j=1}^N| p-x_j|^2d\sigma(p)\right)^{1/2} \right).
\end{multline*}
\end{proof}

Combining Lemma \ref{ene&cond}, Lemma \ref{LemmaR}  and Theorem \ref{eq:as} we obtain the following corollary.
\begin{cor}\label{nuevoBezIII}
Let $(p_{N}(x))_{N\in \mathbb{N}}$ be a sequence of monic polynomials defined by $p_{N} (x)= \prod_{i=1}^{N} (x-z_{i})$, then 
\begin{equation*}
\displaystyle\sum_{i = 1}^{N} \log (\mu_{\rm norm} (p_{N}, z_{i}))
\geq
\frac{N\log(N)}{2}
+(C_{\log} - \log(2))N
+ o(N).
\end{equation*}
\end{cor}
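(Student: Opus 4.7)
The plan is to assemble the three stated ingredients into a short chain of estimates. First I would dispose of the degenerate case: if any $p_N$ has a repeated root, then $\mu_{\rm norm}(p_N, z_i) = \infty$ for that index and the inequality holds trivially. So I may assume the roots $z_1, \ldots, z_N$ are pairwise distinct, which places us squarely in the setting of Lemma \ref{ene&cond}. Rearranging the identity there gives
\begin{equation*}
\sum_{i=1}^N \log \mu_{\rm norm}(p_N, z_i) = \mathcal{E}_{\log}(\omega_N) + N \log\!\left( \frac{\sqrt{N(N+1)}}{2} \right) + N \log \left( \int_{\S} \prod_{j=1}^N |p - x_j|^2 \, d\sigma(p) \right)^{\!1/2}.
\end{equation*}

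Next, I would bound the right-hand side from below term by term. The first term is handled by the trivial inequality $\mathcal{E}_{\log}(\omega_N) \geq \min_{\mu_N \subset \S} \mathcal{E}_{\log}(\mu_N)$ together with the asymptotic expansion of Theorem \ref{eq:as}, yielding $\mathcal{E}_{\log}(\omega_N) \geq \kappa N^2 - \tfrac{1}{2} N \log N + C_{\log} N + o(N)$. The third term is bounded using Lemma \ref{LemmaR}, which furnishes the estimate $\left( \int_\S \prod_j |p - x_j|^2 \, d\sigma(p) \right)^{1/2} \geq e^{-\kappa N}$ --- the same estimate already invoked in the proof of Theorem \ref{th_cond_energy} --- so that $N \log(\cdot)^{1/2} \geq -\kappa N^2$.

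Substituting both lower bounds and observing that the two $\kappa N^2$ contributions cancel leaves
\begin{equation*}
\sum_{i=1}^N \log \mu_{\rm norm}(p_N, z_i) \geq -\tfrac{1}{2} N \log N + C_{\log} N + N \log\!\left( \tfrac{\sqrt{N(N+1)}}{2} \right) + o(N).
\end{equation*}
A routine expansion $N \log(\sqrt{N(N+1)}/2) = N \log N - N \log 2 + O(1)$, obtained by writing $\log(N+1) = \log N + \log(1 + 1/N)$ and absorbing the $O(1)$ term into $o(N)$, then collapses the right-hand side to $\tfrac{1}{2} N \log N + (C_{\log} - \log 2) N + o(N)$, as required.

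There is essentially no obstacle in this argument: all the nontrivial content has been done elsewhere, and what remains is bookkeeping. The only point that needs genuine care is verifying that Lemma \ref{LemmaR} and Theorem \ref{eq:as} each yield an inequality in the correct direction for a lower bound on $\sum \log \mu_{\rm norm}$ --- but since $\kappa < 0$ enters with the right sign in both places, and the $\kappa N^2$ terms cancel cleanly, this verification is straightforward.
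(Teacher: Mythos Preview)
Your proposal is correct and follows precisely the route the paper indicates: the paper gives no detailed argument for this corollary, merely stating that it follows by combining Lemma~\ref{ene&cond}, Lemma~\ref{LemmaR}, and Theorem~\ref{eq:as}, and you have faithfully carried out that combination. The handling of the degenerate case, the rearrangement of the identity from Lemma~\ref{ene&cond}, the application of the two lower bounds, the cancellation of the $\kappa N^2$ terms, and the final asymptotic simplification are all in order.
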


\subsection{Proof of Theorem \ref{thmain1}}
We only have to take $B(N) =C\sqrt{N}$ in Theorem \ref{th_cond_energy} and we obtain
\begin{equation}\label{miecuacion}
\mathcal{E}_{\log}(\omega_{N})
\leq
\kappa N ^2 - \frac{N\log(N)}{2} + \log(2C) N - \frac{N}{2}\log\left( 1+ \frac{1}{N} \right).
\qedhere
\end{equation}
\begin{flushright}
$\qed$
\end{flushright}

\begin{obs}
Note that the asymptotic provided by Theorem \ref{thmain1} is really close to the minimal value of the logarithmic energy (the current knowledge of such quantity is given in Theorem \ref{eq:as}).
Theorem \ref{thmain1} may reverse the approach of Problem \ref{smale7}, focusing on building families of points with optimal condition number as proposed in \cite{BEMOC}.
\end{obs}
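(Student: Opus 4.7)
The plan is to invoke Theorem \ref{th_cond_energy} with the specific bound $B(N)=C\sqrt{N}$, which translates the hypothesis on the condition number directly into an explicit upper bound for $\mathcal{E}_{\log}(\omega_{N})$; I would then compare this bound term by term against the asymptotic expansion of the minimal logarithmic energy given by Theorem \ref{eq:as}. Essentially all of the conceptual work has been done already, since Lemma \ref{ene&cond} (together with the lemma \ref{LemmaR} that controls $\int_{\S}\prod_j|p-x_j|^2\,d\sigma(p)$ from below) is what makes Theorem \ref{th_cond_energy} available, so what remains is book-keeping.

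First, substituting $B(N) = C\sqrt{N}$ into Theorem \ref{th_cond_energy} yields
\[
\mathcal{E}_{\log}(\omega_{N}) \leq \kappa N^2 - N\log\!\left(\frac{\sqrt{N(N+1)}}{2}\right) + N\log(C\sqrt{N}).
\]
Writing $\sqrt{N(N+1)}=N\sqrt{1+1/N}$ and collecting the logarithms, the last two summands combine to $-\tfrac{N}{2}\log N + N\log(2C) - \tfrac{N}{2}\log(1+1/N)$. The tail $\tfrac{N}{2}\log(1+1/N)$ tends to $\tfrac12$ and so contributes only $O(1)$, giving
\[
\mathcal{E}_{\log}(\omega_{N}) \leq \kappa N^2 - \tfrac{N}{2}\log N + N\log(2C) + O(1).
\]

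Next, Theorem \ref{eq:as} identifies the minimum of $\mathcal{E}_{\log}$ as $\kappa N^2 - \tfrac12 N\log N + C_{\log}N + o(N)$. Subtracting the two expansions, the leading $\kappa N^2$ term and the $-\tfrac12 N\log N$ term cancel exactly, leaving a linear residual $(\log(2C)-C_{\log})N + o(N)$, which is bounded by $cN$ for a constant $c$ depending only on $C$. This is precisely the inequality claimed.

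The one delicate point is that the cancellation of both the $\kappa N^2$ and the $-\tfrac12 N\log N$ contributions is what forces the gap to be linear, and this is exactly why the hypothesis $\mu_{\rm norm}(p_{N})\leq C\sqrt{N}$ is sharp for a result of this form: a weaker growth assumption such as $\mu_{\rm norm}\leq CN^{1/2+\varepsilon}$ would introduce an $\varepsilon N\log N$ residual that could not be absorbed into a linear correction. So while I do not foresee a conceptual obstacle, the coefficient matching between the two expansions must be verified carefully before concluding.
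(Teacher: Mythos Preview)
Your proposal is correct and follows essentially the same route as the paper: the paper's justification (the proof of Theorem~\ref{thmain1} immediately preceding this remark) plugs $B(N)=C\sqrt{N}$ into Theorem~\ref{th_cond_energy}, simplifies to $\kappa N^{2}-\tfrac{N}{2}\log N+\log(2C)\,N-\tfrac{N}{2}\log(1+1/N)$, and then reads off the linear gap by comparison with the expansion in Theorem~\ref{eq:as}. Your additional comment about the sharpness of the $\sqrt{N}$ hypothesis is a valid observation not made explicit in the paper, but it does not alter the argument.
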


\begin{obs}
If we  repeat the proof of \cite[Lemma 1.1]{BEMOC}, extending the computations to the $N$ term we have that
\begin{equation}\label{eso}
\mu_{\rm norm}(p_{N})\geq \frac{e^{C_{\log}}}{2} \sqrt{N} + o(\sqrt{N})
\end{equation}
for any sequence of monic polynomials $(p_{N})_{N}$. 
Let us take a sequence of monic polynomials $(p_{N})_{N}$ with optimal condition number, i.e. $\mu_{norm}(p_{N}(x))\leq C\sqrt{N}$ for some universal constant $C$.
Combining equation \eqref{eso} with the bound for $C_{\log}$ given in equation \eqref{lb} we obtain
\begin{equation*}
0.4
\leq
\frac{e^{C_{\log}}}{2}
\leq 
C.
\end{equation*}
If 
$C = \frac{e^{C_{\log}}}{2}$ then the third term on the asymptotic expansion \eqref{miecuacion} is equal to the third term on the minimal logarithmic energy asymptotic expansion.
\end{obs}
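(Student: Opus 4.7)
The plan is to establish \eqref{eso} by an averaging argument from Corollary \ref{nuevoBezIII}, then read off the two constant inequalities by combining \eqref{eso} with the numerical lower bound \eqref{lb} and with the optimality hypothesis, and finally verify the equality remark by substitution into \eqref{miecuacion}.

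For \eqref{eso}, I would proceed directly from Corollary \ref{nuevoBezIII}, which already provides
\begin{equation*}
\sum_{i=1}^{N} \log \mu_{\rm norm}(p_{N}, z_{i}) \geq \frac{N\log N}{2} + (C_{\log} - \log 2)\,N + o(N).
\end{equation*}
Since $\mu_{\rm norm}(p_{N}) = \max_{i} \mu_{\rm norm}(p_{N}, z_{i})$ dominates the geometric mean of its entries, dividing by $N$ and exponentiating converts this sum estimate into
\begin{equation*}
\mu_{\rm norm}(p_{N}) \geq \exp\!\left(\frac{1}{N}\sum_{i=1}^{N}\log \mu_{\rm norm}(p_{N}, z_{i})\right) = \frac{e^{C_{\log}}}{2}\sqrt{N}\,(1+o(1)),
\end{equation*}
which is exactly \eqref{eso}. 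No new ingredient beyond Corollary \ref{nuevoBezIII} (itself built from Lemma \ref{ene&cond}, Lemma \ref{LemmaR} and Theorem \ref{eq:as}) is required; this is the sense in which the argument ``repeats the proof of \cite[Lemma 1.1]{BEMOC}'' but extends it to the full $N$ term.

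For the chain $0.4 \leq e^{C_{\log}}/2 \leq C$, the right inequality is automatic: comparing the hypothesis $\mu_{\rm norm}(p_{N}) \leq C\sqrt{N}$ with the lower bound \eqref{eso} and dividing through by $\sqrt{N}$ as $N\to\infty$ forces $e^{C_{\log}}/2 \leq C$. The left inequality is a purely numerical check: inserting \eqref{lb} yields $e^{C_{\log}}/2 \geq e^{-0.2232823\ldots}/2$, and a direct estimate (using $\ln 0.8 \approx -0.22314$) confirms that this quantity is at least $0.4$.

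The closing equality statement is immediate once one compares coefficients: the linear-in-$N$ term of the upper bound \eqref{miecuacion} on $\mathcal{E}_{\log}(\omega_{N})$ is $\log(2C)\,N$, while Theorem \ref{eq:as} identifies the linear term of $\min_{\mu_{N}\subset\S}\mathcal{E}_{\log}(\mu_{N})$ as $C_{\log}\,N$, so substituting $C = e^{C_{\log}}/2$ gives $\log(2C) = C_{\log}$ and the two linear coefficients coincide. The only delicate point in the whole argument is the numerical verification that $e^{-0.2232823\ldots}/2 \geq 0.4$, which is sharp to within a few decimals; everything else is a direct consequence of results already at hand.
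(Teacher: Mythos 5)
Your derivation is correct and in substance matches what the paper intends: the paper offers no proof here beyond the citation of \cite{BEMOC}*{Lemma 1.1}, and your route---Corollary \ref{nuevoBezIII} combined with the fact that the maximum dominates the geometric mean, so that $\log\mu_{\rm norm}(p_N)\geq \frac1N\sum_i\log\mu_{\rm norm}(p_N,z_i)\geq \frac{\log N}{2}+C_{\log}-\log 2+o(1)$---is exactly the natural internal reconstruction of that lemma ``extended to the $N$ term,'' and the remaining two claims follow as you say. One small caveat on the point you yourself flag as delicate: the numerical verification does \emph{not} quite confirm $e^{-0.2232823\ldots}/2\geq 0.4$; since $\ln(0.8)=-0.22314\ldots > -0.2232823\ldots$, one gets $e^{-0.2232823\ldots}/2\approx 0.39994<0.4$, so the constant $0.4$ in the displayed chain is a rounding (an imprecision already present in the paper's statement, not introduced by you) and should be read as $0.39994\ldots\leq e^{C_{\log}}/2\leq C$.
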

\section{Proof of Theorem \ref{thmain2}}\label{sec4}

The main tool that we use in this section is a theorem that we  deduced from a formula presented in \cite{10.2307/23032778} and proved in \cite{Beltran:2015:FLF:2729553.2729572} relating the logarithmic energy of a set of points on the Riemann sphere $\mathbb{S}$ with a quantity  involving the condition number of a polynomial and a quotient involving norms of polynomials.


\begin{lem}[]\label{ABS}
Let $\omega_{N} = \{x_{1},\ldots,x_{N} \}$ be a set of different spherical points in $\mathbb{S}^{2}$, $z_{i} = \pi_{\mathbb{S}^{2}} (x_{i})$  be complex points and $p_{N}(x) = \prod_{i=1}^{N} (x-z_{i})$ for all $N\in \mathbb{N}$. 
Then, 
\begin{multline*}
\mathcal{E}_{\log}(\omega_{N}) 
= 
 \displaystyle\sum_{i = 1}^{N} \log (\mu_{\rm norm} (p_{N}, z_{i}))
+ 
N \log \left( \frac{\displaystyle \prod_{i=1}^{N} \sqrt{1+|z_{i}|^{2}}}{|| p_{N} ||} \right) 
\\
-\log(2)N^2
- \frac{N\log(N)}{2} 
+\log(2)N.
\end{multline*}
\end{lem}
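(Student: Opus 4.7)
The plan is to combine two elementary identities: the closed-form expression for the condition number of a univariate polynomial recorded earlier in Section \ref{sec2}, and the stereographic-projection formula for the chordal distance between two points on $\mathbb{S}^{2}$.

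First I would evaluate $\mu_{\mathrm{norm}}(p_{N},z_{i})$ at each root $z_{i}$ using the identity
\[
\mu_{\mathrm{norm}}(P,z)=N^{1/2}\,\frac{\|P\|\,(1+|z|^{2})^{N/2-1}}{|P'(z)|},
\]
recalling that for the monic product $p_{N}(x)=\prod_{i}(x-z_{i})$ we have $p_{N}'(z_{i})=\prod_{j\neq i}(z_{i}-z_{j})$. Taking logarithms and summing over $i$ yields
\[
\sum_{i=1}^{N}\log\mu_{\mathrm{norm}}(p_{N},z_{i})
=\tfrac{N}{2}\log N+N\log\|p_{N}\|+\Bigl(\tfrac{N}{2}-1\Bigr)\sum_{i=1}^{N}\log(1+|z_{i}|^{2})-\sum_{i\neq j}\log|z_{i}-z_{j}|.
\]

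Second, I would use the standard chordal identity
\[
|x_{i}-x_{j}|=\frac{2|z_{i}-z_{j}|}{\sqrt{1+|z_{i}|^{2}}\sqrt{1+|z_{j}|^{2}}},
\]
which follows directly from the stereographic parametrization in Section \ref{notation}. Summing $\log|x_{i}-x_{j}|$ over ordered pairs $i\neq j$ and using that each index $i$ appears in $2(N-1)$ such pairs gives
\[
-\mathcal{E}_{\log}(\omega_{N})=N(N-1)\log 2+\sum_{i\neq j}\log|z_{i}-z_{j}|-(N-1)\sum_{i=1}^{N}\log(1+|z_{i}|^{2}).
\]

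Finally, I would solve the first display for $\sum_{i\neq j}\log|z_{i}-z_{j}|$ and substitute into the second. The coefficients of $\sum_{i}\log(1+|z_{i}|^{2})$ telescope, combining as $(N-1)-(N/2-1)=N/2$, producing exactly $N\log\prod_{i}\sqrt{1+|z_{i}|^{2}}$, while the constant contributions rearrange as $-N(N-1)\log 2-\tfrac{N}{2}\log N=-\log(2)N^{2}+\log(2)N-\tfrac{N}{2}\log N$, which matches the claimed formula. No step is really an obstacle; the only place one must be careful is the bookkeeping of the factor $(N-1)$ versus $(N/2-1)$ in front of $\sum_{i}\log(1+|z_{i}|^{2})$, since the precise cancellation is what produces the clean expression $\prod_{i}\sqrt{1+|z_{i}|^{2}}/\|p_{N}\|$ inside the logarithm.
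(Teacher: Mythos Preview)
Your computation is correct: the two ingredients you use (the explicit formula for $\mu_{\mathrm{norm}}(P,z)$ from Section~\ref{sec2} and the chordal-distance identity for the stereographic projection) combine exactly as you describe, and the bookkeeping of the $(N-1)$ versus $(N/2-1)$ coefficients in front of $\sum_i\log(1+|z_i|^2)$ is right, yielding the coefficient $N/2$ and hence the factor $\prod_i\sqrt{1+|z_i|^2}/\|p_N\|$.

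Your route is, however, different from the one in the paper. The paper does not carry out a direct computation; it simply invokes \cite{Beltran:2015:FLF:2729553.2729572}*{Lemma~1.6}, where the identity is stated for points on the Riemann sphere $\mathbb{S}$, and then applies Lemma~\ref{lem_ener} to transfer from $\mathbb{S}$ to $\mathbb{S}^2$ (this is the source of the $-\log(2)N^2+\log(2)N$ correction). Your argument is entirely self-contained, bypassing both the external reference and the $\mathbb{S}\leftrightarrow\mathbb{S}^2$ translation, and it makes transparent exactly where each term in the final formula comes from. The paper's approach is shorter on the page but hides the mechanism; yours exposes that the identity is really nothing more than the two elementary formulas combined.
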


\begin{proof}
We redirect the reader to the proof presented in \cite{Beltran:2015:FLF:2729553.2729572}*{Lemma~1.6} with a few comments.
In \cite{Beltran:2015:FLF:2729553.2729572} authors take as the logarithmic energy a half of our quantity for the logarithmic energy. 
Also, the statement is a bit ambiguous and it may seem that the formula only works for minimizers of the logarithmic energy.
Nevertheless, a careful reading of the proof of \cite{Beltran:2015:FLF:2729553.2729572}*{Lemma~1.6} allows the reader to deduce that equation is valid for any set of different points in the sphere.
Through Lemma \ref{lem_ener} we can translate the formula from \cite{Beltran:2015:FLF:2729553.2729572} into the notation that we are following here, with the appropiate logarithmic energy on the unit sphere $\mathbb{S}^{2}$.
\end{proof}

From Lemma \ref{ABS} we deduce a characterization for the quotient of the product of the norms of a group of monomials and the norm of the product of these monomials, with the Bombieri-Weyl norm.

\begin{thm}\label{miformula}
Given a set of complex points $z_{1},\ldots,z_{N}$, let $z_{i} = \pi_{\S}(x_{i})$ for $1\leq i \leq N$.
Then we have
\begin{equation*}
\frac{\displaystyle \prod_{i=1}^{N} ||x-z_{i}||}{\left|\left| \displaystyle\prod_{i=1}^{N} (x-z_{i}) \right|\right|}
=
\frac{2^{N}}{\sqrt{N+1}}\left(\int_{\S}\prod_{i=1}^N| p-x_i|^2d\sigma(p)\right)^{-1/2}
\end{equation*}
where $||*||$ is the Bombieri-Weyl norm.
\end{thm}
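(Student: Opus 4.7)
The plan is to derive Theorem \ref{miformula} by feeding the explicit expression \eqref{def:workingmu1} for the condition number into the identity of Lemma \ref{ABS}, exploiting a cancellation of the logarithmic energy, and then algebraically solving for the target ratio.

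First, I would record that for any $z_i \in \mathbb{C}$ the Bombieri--Weyl norm of the linear polynomial $x - z_i$ is $\sqrt{1 + |z_i|^2}$: its homogenization is the form $x - z_i y$, and applying the definition of $\|g\|$ with $N = 1$ gives $\binom{1}{0}^{-1}|z_i|^{2} + \binom{1}{1}^{-1} = 1 + |z_i|^2$. Hence $\prod_{i=1}^{N} \sqrt{1 + |z_i|^{2}} = \prod_{i=1}^{N} \|x - z_i\|$, so the factor appearing in Lemma \ref{ABS} is precisely the numerator of the ratio we wish to compute.

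Next, writing $I := \int_{\S} \prod_{j=1}^{N} |p - x_j|^{2}\, d\sigma(p)$, I would take logarithms in \eqref{def:workingmu1} and sum over $i = 1,\ldots,N$. This yields three contributions: a constant term $N \log(\sqrt{N(N+1)}/2)$, a term $\frac{N}{2} \log I$, and a double sum $-\sum_i \sum_{j \neq i} \log |x_i - x_j|$, which by the definition \eqref{defnener} equals $+\mathcal{E}_{\log}(\omega_N)$. Substituting into Lemma \ref{ABS}, the two occurrences of $\mathcal{E}_{\log}(\omega_N)$ cancel, and I am left with
\begin{equation*}
0 = N \log \frac{\sqrt{N(N+1)}}{2} + \frac{N}{2} \log I + N \log \frac{\prod_{i=1}^{N} \|x - z_i\|}{\|p_N\|} - \log(2)\, N^{2} - \frac{N \log N}{2} + \log(2)\, N.
\end{equation*}

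Finally, I would divide through by $N$ and use $\log(\sqrt{N(N+1)}/2) = \frac{1}{2} \log N + \frac{1}{2} \log(N+1) - \log 2$; the $\frac{1}{2}\log N$ contribution cancels against $-\frac{\log N}{2}$, the two $\log 2$ constants cancel, and isolating the target ratio gives $\log(\prod_{i} \|x - z_i\|/\|p_N\|) = N \log 2 - \frac{1}{2} \log(N+1) - \frac{1}{2} \log I$. Exponentiating yields the claimed identity. There is no real obstacle: the substantive content (the link between logarithmic energy, condition number, and Bombieri--Weyl norm) is already packaged in Lemma \ref{ABS}, so once the cancellation of $\mathcal{E}_{\log}(\omega_N)$ is spotted the proof reduces to a one-line algebraic simplification, the only risk being a bookkeeping slip in tracking the factors of $N$ and $N^{2}$.
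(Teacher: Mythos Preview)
Your argument is essentially the paper's own proof: the paper also identifies $\prod_i \sqrt{1+|z_i|^2}$ with $\prod_i \|x-z_i\|$, combines Lemma~\ref{ABS} with the summed logarithm of \eqref{def:workingmu1} (packaged there as Lemma~\ref{ene&cond}) so that $\mathcal{E}_{\log}(\omega_N)$ cancels, and then performs the same algebraic simplification you describe.

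One point you overlook: both Lemma~\ref{ABS} and formula \eqref{def:workingmu1} require the $z_i$ to be pairwise distinct (otherwise $\mu_{\rm norm}(p_N,z_i)=\infty$ and the identities break down), whereas Theorem~\ref{miformula} is stated for arbitrary complex points. The paper closes this gap at the end of its proof by observing that both sides of the claimed identity are continuous in the $z_i$ and passing to the limit $z_i\to z_j$; you should add that remark.
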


\begin{proof}
This theorem is a consequence of Lemma \ref{ABS} and Lemma \ref{ene&cond}.
Note that 
\begin{equation*}
\frac{\displaystyle \prod_{i=1}^{N} \sqrt{1+|z_{i}|^{2}}}{|| p_{N} ||} 
=
\frac{\displaystyle \prod_{i=1}^{N} ||x-z_{i}||}{\left|\left| \displaystyle\prod_{i=1}^{N} (x-z_{i}) \right|\right|}.
\end{equation*}
Let us suppose that $z_{i} \neq z_{j}$ if $i\neq j$, combining both results we obtain
\begin{multline*}
N \log \left( \frac{\displaystyle \prod_{i=1}^{N} ||x-z_{i}||}{\left|\left| \displaystyle\prod_{i=1}^{N} (x-z_{i}) \right|\right|} \right) 
-\log(2)N^2
- \frac{N\log(N)}{2} 
+\log(2)N
\\
=
-N\log\left( \frac{\sqrt{N(N+1)}}{2} \right)
-N\log \left(\left(\int_{\S}\prod_{i=1}^N| p-x_i|^2d\sigma(p)\right)^{1/2} \right).
\end{multline*}
A few manipulations on the terms above lead us to
\begin{multline*}
N \log \left( \frac{\displaystyle \prod_{i=1}^{N} ||x-z_{i}||}{\left|\left| \displaystyle\prod_{i=1}^{N} (x-z_{i}) \right|\right|} \right) 
=
\log(2)N^2
- \frac{N\log(N)}{2} 
- \frac{N\log \left( 1 + \frac{1}{N} \right)}{2} 
\\
-N\log \left(\left(\int_{\S}\prod_{i=1}^N| p-x_i|^2d\sigma(p)\right)^{1/2} \right)
\end{multline*}
and we conclude
\begin{equation*}
\frac{\displaystyle \prod_{i=1}^{N} ||x-z_{i}||}{\left|\left| \displaystyle\prod_{i=1}^{N} (x-z_{i}) \right|\right|}
=
\frac{2^{N}}{\sqrt{N+1}}\left(\int_{\S}\prod_{i=1}^N| p-x_i|^2d\sigma(p)\right)^{-1/2}.
\end{equation*}
Let us consider the case where $z_{i} = z_{j}$ for $i \neq j$.
We can take limits in the previous expression
\begin{multline*}
\lim\limits_{z_{i}\rightarrow z_{j}}
\left(\int_{\S} | p-x_1|^2 | \ldots | p-x_i|^2 \ldots | p-x_j|^2 \ldots | p-x_N|^2 d\sigma(p)\right)^{-1/2}
\\
=
\lim\limits_{z_{i}\rightarrow z_{j}}
\left(\int_{\S} | p-x_1|^2 | \ldots | p-x_j|^4 \ldots | p-x_N|^2 d\sigma(p)\right)^{-1/2}
<
\infty
\end{multline*}
and by continuity of the function, we can extend the characterization to polynomials with roots with multiplicity higher than one.
\end{proof}

Although the quantity on the right of the equation in Theorem \ref{miformula} is difficult to compute, we can easily bound it, as we do in Theorem \ref{norms}.

\begin{thm}\label{norms}
Given a set of complex points $z_{1},\ldots,z_{N}$,  we have
\begin{equation*}
\displaystyle \prod_{i=1}^{N} ||x-z_{i}||
\leq
\sqrt{\frac{e^{N}}{N+1}}
\left|\left| \prod_{i=1}^{N} (x-z_{i}) \right|\right|,
\end{equation*}
where $||*||$ is the Bombieri-Weyl norm.
\end{thm}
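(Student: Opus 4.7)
My plan is to combine Theorem \ref{miformula} with a lower bound on the spherical integral
\[
I(x_1,\ldots,x_N) \;=\; \int_{\S}\prod_{i=1}^N |p-x_i|^2\, d\sigma(p).
\]
By Theorem \ref{miformula}, the quantity to be bounded rewrites as
\[
\frac{\prod_{i=1}^N \|x-z_i\|}{\left\|\prod_{i=1}^N(x-z_i)\right\|}
\;=\; \frac{2^N}{\sqrt{N+1}}\, I(x_1,\ldots,x_N)^{-1/2},
\]
so the claimed inequality is equivalent to the lower bound $I(x_1,\ldots,x_N) \geq (4/e)^N$. In other words, everything reduces to showing that this spherical integral is not too small, uniformly in the configuration $\{x_1,\ldots,x_N\}\subset\S$.

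To produce that lower bound, I will apply Jensen's inequality to the concave function $\log$ with respect to the probability measure $d\sigma$:
\[
\log I(x_1,\ldots,x_N)
\;\geq\; \int_{\S} \log\!\prod_{i=1}^N |p-x_i|^2\, d\sigma(p)
\;=\; 2\sum_{i=1}^N \int_{\S} \log|p-x_i|\, d\sigma(p).
\]
Each individual integral is a rotationally invariant quantity on the sphere, so it equals the double integral $\int_{\S}\int_{\S}\log|p-q|\,d\sigma(p)\,d\sigma(q)$, which by equation \eqref{conjetura} equals $-\kappa=\log 2 - \tfrac{1}{2}$. Consequently
\[
\log I(x_1,\ldots,x_N) \;\geq\; 2N(\log 2 - \tfrac{1}{2}) \;=\; N\log\!\tfrac{4}{e},
\]
which is exactly the bound $I \geq (4/e)^N$ needed. (This is precisely the content of Lemma \ref{LemmaR}, which is already used in the proof of Theorem \ref{th_cond_energy}, so I may simply cite it rather than redo Jensen.)

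Putting the two pieces together yields
\[
\frac{\prod_{i=1}^N \|x-z_i\|}{\left\|\prod_{i=1}^N(x-z_i)\right\|}
\;\leq\; \frac{2^N}{\sqrt{N+1}}\cdot\Bigl(\tfrac{e}{4}\Bigr)^{N/2}
\;=\; \sqrt{\tfrac{e^N}{N+1}},
\]
which is the stated inequality. There is no real obstacle here: the substantive work was already done in establishing Theorem \ref{miformula} and in the computation of the continuous logarithmic energy constant $\kappa$. The only mild subtlety is that Theorem \ref{miformula} was derived under the assumption that the $z_i$ are distinct, so if one wishes to include configurations with repeated roots one invokes the continuity argument already used at the end of the proof of Theorem \ref{miformula}; the integral $I$ is a continuous function of the $N$-tuple $(x_1,\ldots,x_N)\in\S^{\,N}$, so the inequality passes to the closure.
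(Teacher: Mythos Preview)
Your proof is correct and follows essentially the same route as the paper: invoke Theorem \ref{miformula} to reduce the inequality to a lower bound on the spherical integral, then obtain that bound via Jensen (i.e.\ Lemma \ref{LemmaR}) and simplify using $\kappa=\tfrac12-\log 2$. The only cosmetic difference is that you phrase the bound as $I\geq(4/e)^N$ while the paper writes the equivalent $I^{1/2}\geq e^{-\kappa N}$.
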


\begin{proof}
We just have to bound the expression in Theorem \ref{miformula} using Lemma \ref{LemmaR}:
\begin{equation*}
\frac{\displaystyle \prod_{i=1}^{N} ||x-z_{i}||}{\left|\left| \displaystyle\prod_{i=1}^{N} (x-z_{i}) \right|\right|}
=
\frac{2^{N}}{\sqrt{N+1}}\left(\int_{\S}\prod_{j=1}^N| p-x_j|^2d\sigma(p)\right)^{-1/2}
\leq
\frac{2^{N}e^{\kappa N}}{\sqrt{N+1}}.
\end{equation*}
Now we rewrite $2^{N} = e^{N\log (2)}$ and so,
\begin{equation*}
2^{N}e^{\kappa N} = e^{\frac{N}{2}}.
\end{equation*}

\end{proof}


\subsection{Sharpness of Theorem \ref{thmain2}}\label{sec5}

We have at least two natural candidates when searching for polynomials that make inequality from Theorem \ref{norms} sharp: polynomials with optimal condition number and polynomials coming from minimizers of the logarithmic energy.


\subsubsection{Polynomials with minimal condition number}


\begin{lem}\label{sharpcondi}
Let $(p_{N}(x))_{N\in \mathbb{N}}$ be a sequence of polynomials defined by $p_{N}(x) = \prod_{i=1}^{N} (x-z_{i})$ with optimal condition number, i.e. $\mu_{norm}(p_{N}(x)) \leq C\sqrt{N}$ for some universal constant $C$, 
let $\omega_{N} = \{ x_{1},\ldots ,x_N \} \subset \mathbb{S}^{2}$ be a set of points obtained by the inverse application of the projection $\pi_{\mathbb{S}^{2}}$ of the roots of the polynomials.
Then 
\begin{equation*}
\mathcal{E}_{\log}(\omega_{N}) -
\displaystyle\sum_{i = 1}^{N} \log (\mu_{\rm norm} (p_{N}, z_{i}))
\geq
\kappa N^2
-
N \log(N)
 + (C_{\log} - \log(C))N
 +o(N)
.
\end{equation*}
\end{lem}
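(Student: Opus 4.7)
The natural starting point is Lemma \ref{ene&cond}, which already provides the exact identity
\begin{equation*}
\mathcal{E}_{\log}(\omega_{N}) - \sum_{i=1}^{N}\log\mu_{\rm norm}(p_{N},z_{i}) = -N\log\frac{\sqrt{N(N+1)}}{2} - \frac{N}{2}\log I_{N},
\end{equation*}
where $I_{N}:=\int_{\S}\prod_{j=1}^{N}|p-x_{j}|^{2}\,d\sigma(p)$. Since the left-hand side of the claimed inequality is thus expressed purely in terms of $I_{N}$ and elementary quantities, the whole proof reduces to proving a good upper bound on $I_{N}$ using the hypothesis on the condition number.

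To obtain such a bound, invoke formula \eqref{def:workingmu1} together with the pointwise bound $\mu_{\rm norm}(p_{N},z_{i})\leq\mu_{\rm norm}(p_{N})\leq C\sqrt{N}$, valid for every $i$. Rearranging gives
\begin{equation*}
I_{N}^{1/2}\leq \frac{2C}{\sqrt{N+1}}\prod_{j\neq i}|x_{i}-x_{j}|,\qquad i=1,\dots,N.
\end{equation*}
Take logarithms and average the $N$ inequalities, using the identity
\begin{equation*}
\sum_{i=1}^{N}\log\prod_{j\neq i}|x_{i}-x_{j}| = \sum_{i\neq j}\log|x_{i}-x_{j}| = -\mathcal{E}_{\log}(\omega_{N}),
\end{equation*}
which yields
\begin{equation*}
\frac{N}{2}\log I_{N} \leq N\log\frac{2C}{\sqrt{N+1}} - \mathcal{E}_{\log}(\omega_{N}).
\end{equation*}

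Plugging this back into the identity from the first step, the $\sqrt{N+1}$ factors cancel and the surviving terms simplify to
\begin{equation*}
\mathcal{E}_{\log}(\omega_{N}) - \sum_{i=1}^{N}\log\mu_{\rm norm}(p_{N},z_{i}) \geq \mathcal{E}_{\log}(\omega_{N}) - \frac{N}{2}\log N - N\log C.
\end{equation*}
To finish, use the universal lower bound $\mathcal{E}_{\log}(\omega_{N})\geq \min_{\mu_{N}\subset\S}\mathcal{E}_{\log}(\mu_{N})$ together with the sharp asymptotic from Theorem \ref{eq:as}, namely $\mathcal{E}_{\log}(\omega_{N})\geq \kappa N^{2}-\frac{N\log N}{2}+C_{\log}N+o(N)$. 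Substituting produces exactly
\begin{equation*}
\kappa N^{2}-N\log N+(C_{\log}-\log C)N+o(N),
\end{equation*}
which is the claim.

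The argument is essentially bookkeeping; the only real idea is the averaging step in the second paragraph. A pointwise bound on each $\mu_{\rm norm}(p_{N},z_{i})$ gives a family of upper bounds on $I_{N}$ that individually still depend on the geometry of $\omega_{N}$; averaging in $i$ is what converts the product $\prod_{j\neq i}|x_{i}-x_{j}|$ into $\mathcal{E}_{\log}(\omega_{N})$, after which the universal lower bound on the logarithmic energy takes over. This same trick underlies the extension of \cite[Lemma 1.1]{BEMOC} mentioned in the remark following Theorem \ref{thmain1}, so no further obstacle arises.
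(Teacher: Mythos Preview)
Your proof is correct, but the detour through the integral $I_{N}$ is unnecessary and in fact cancels itself out. Observe that the inequality you derive after ``the $\sqrt{N+1}$ factors cancel'', namely
\[
\mathcal{E}_{\log}(\omega_{N}) - \sum_{i=1}^{N}\log\mu_{\rm norm}(p_{N},z_{i}) \geq \mathcal{E}_{\log}(\omega_{N}) - \tfrac{N}{2}\log N - N\log C,
\]
is \emph{literally} the statement $\sum_{i}\log\mu_{\rm norm}(p_{N},z_{i})\leq N\log(C\sqrt{N})$, which follows in one line from the pointwise hypothesis $\mu_{\rm norm}(p_{N},z_{i})\leq C\sqrt{N}$. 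This is exactly what the paper does: it bounds the sum of condition-number logs directly and then adds the lower bound $\mathcal{E}_{\log}(\omega_{N})\geq \kappa N^{2}-\tfrac{1}{2}N\log N+C_{\log}N+o(N)$ from Theorem~\ref{eq:as}.

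Your route passes through Lemma~\ref{ene&cond} to trade the condition numbers for $I_{N}$, then uses the condition-number bound to control $I_{N}$ via the averaging trick, which reintroduces $\mathcal{E}_{\log}(\omega_{N})$; the net effect is that Lemma~\ref{ene&cond} is applied forwards and then, implicitly, backwards. Nothing is wrong, and the averaging idea is a genuine one (it is how one proves \eqref{eso}), but for this particular lemma it buys nothing over the two-line argument in the paper.
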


\begin{proof}
We have $\mu_{norm}(p_{N}(x)) \leq C\sqrt{N}$ for some universal constant $C$, then
\begin{equation*}
-\displaystyle\sum_{i = 1}^{N}
\log \left(\mu_{\rm norm} (p_{N}, z_{i})\right) 
\geq 
-\frac{N\log (N)}{2} -\log(C)N.
\end{equation*}
From Theorem \ref{eq:as} we have that 
\begin{equation*}
\mathcal{E}_{\log}(\omega_{N})
\geq
\min\limits_{\mu_{N} \subset \S}\mathcal{E}_{\log} (\mu_{N})=
\kappa\,N^2-\frac12\,N\log N+C_{\log}\,N+o(N),
\end{equation*}
so we have that 
\begin{multline*}
\mathcal{E}_{\log}(\omega_{N}) -
\displaystyle\sum_{i = 1}^{N} \log (\mu_{\rm norm} (p_{N}, z_{i}))
\\
\geq
\kappa N^{2} - \frac{N\log N}{2} + C_{\log}N + o(N) -\frac{N\log (N)}{2} -\log(C)N
\end{multline*}
and the proof is finished.
\end{proof}



\begin{thm}\label{sharp2}
Let $(p_{N}(x))_{N\in \mathbb{N}}$ be a sequence of polynomials defined by $p_{N}(x) = \prod_{i=1}^{N} (x-z_{i})$ with optimal condition number, i.e. $\mu_{norm}(p_{N}(x)) \leq C\sqrt{N}$ for some universal constant $C$, then
\begin{equation*}
\displaystyle \prod_{i=1}^{N} ||x-z_{i}||
\geq
\frac{
e^{C_{\log} + o(1)}
}{2C}
\sqrt{\frac{e^{N}}{N}}
\left|\left| \prod_{i=1}^{N} (x-z_{i}) \right|\right|,
\end{equation*}
where $||*||$ is the Bombieri-Weyl norm.
\end{thm}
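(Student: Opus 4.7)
The plan is to combine Lemma \ref{ABS} with the lower bound of Lemma \ref{sharpcondi} and then exponentiate, so this theorem will essentially be a bookkeeping consequence of results already in place.

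First I would observe that the quantity we want to bound below is exactly the ratio appearing in Lemma \ref{ABS}. Indeed, the Bombieri-Weyl norm of the linear polynomial $x - z_i$ (obtained by homogenizing to $x - z_i y$) is $\sqrt{1 + |z_i|^2}$, so
\[
\frac{\prod_{i=1}^N \|x - z_i\|}{\|p_N\|} = \frac{\prod_{i=1}^N \sqrt{1 + |z_i|^2}}{\|p_N\|}.
\]
Calling this ratio $Q_N$, Lemma \ref{ABS} can be rearranged to read
\[
N \log Q_N = \Bigl( \mathcal{E}_{\log}(\omega_N) - \sum_{i=1}^N \log \mu_{\rm norm}(p_N, z_i) \Bigr) + \log(2)\, N^2 + \frac{N \log N}{2} - \log(2)\, N.
\]

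Next I would invoke Lemma \ref{sharpcondi}, which under the hypothesis $\mu_{\rm norm}(p_N) \leq C\sqrt N$ gives
\[
\mathcal{E}_{\log}(\omega_N) - \sum_{i=1}^N \log \mu_{\rm norm}(p_N, z_i) \geq \kappa\, N^2 - N \log N + (C_{\log} - \log C)\, N + o(N).
\]
Substituting this into the expression for $N \log Q_N$ and using the identity $\kappa + \log 2 = \tfrac{1}{2}$ collapses the $N^2$ coefficient to $1/2$, while the $N \log N$ contributions combine to $-\tfrac{N \log N}{2}$, and the linear terms become $(C_{\log} - \log C - \log 2)\, N + o(N)$.

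Finally, dividing by $N$ and exponentiating produces
\[
Q_N \geq \frac{e^{C_{\log} - \log C - \log 2 + o(1)}\, e^{N/2}}{\sqrt N} = \frac{e^{C_{\log} + o(1)}}{2C} \sqrt{\frac{e^N}{N}},
\]
which is exactly the claimed inequality. There is no genuine obstacle here: the only care required is checking the arithmetic of the $N^2$, $N \log N$, and $N$ coefficients when merging the two sources, since a sign error in any of them would misidentify the leading constant $e^{C_{\log}}/(2C)$ that governs the sharpness comparison with Theorem \ref{thmain2}.
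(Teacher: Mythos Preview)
Your proof is correct and follows essentially the same route as the paper's: both rearrange Lemma~\ref{ABS} to isolate $N\log Q_N$, plug in the lower bound from Lemma~\ref{sharpcondi}, use $\kappa+\log 2=\tfrac12$ to collapse the $N^2$ term, and then divide by $N$ and exponentiate. Your write-up is in fact slightly more explicit about the bookkeeping of the $N^2$, $N\log N$, and $N$ coefficients than the paper's own proof.
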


\begin{proof}
From Lemma \ref{ABS} and Lemma \ref{sharpcondi} we have that
\begin{multline*}
N\log \left( \frac{\displaystyle \prod_{i=1}^{N} ||x-z_{i}||}{\left|\left| \displaystyle\prod_{i=1}^{N} (x-z_{i}) \right|\right|} \right) 
- \log(2)N^2
- \frac{N}{2} \log(N)
+ \log(2)N
\\
\geq 
\kappa N^2
-N \log(N)
 + (C_{\log} - \log(C))N
 +o(N)
\\
\Rightarrow
\log \left( \frac{\displaystyle \prod_{i=1}^{N} ||x-z_{i}||}{\left|\left| \displaystyle\prod_{i=1}^{N} (x-z_{i}) \right|\right|} \right) 
\geq
\frac{N}{2}
-
\frac{\log(N)}{2}
+ (C_{\log} - \log(C) - \log(2))
+o(1).
\end{multline*}
Some algebraic manipulations lead us to
\begin{equation*}
\displaystyle \prod_{i=1}^{N} ||x-z_{i}||
\geq
e^{C_{\log} +o(1) - \log(2C)}
\sqrt{\frac{e^{N}}{N}}
\left|\left| \displaystyle\prod_{i=1}^{N} (x-z_{i}) \right|\right|.
\end{equation*}
\end{proof}


\subsubsection{Minimizers of the logarithmic energy}

The same computations can be done for a set of minimizers of the logarihmic energy $\omega_{N} = \{ x_{1},\ldots ,x_N \}\subset\mathbb{S}^{2}$ and its associated complex points $z_1,\ldots,z_N$.
Note that in this case, the best bound known for the condition number is given in Corollary \ref{corBeIII}. 
Namely,
%
let $\omega_{N} = \{ x_{1},\ldots ,x_N \}\subset\mathbb{S}^{2}$ be a set of minimizers of the logarithmic energy.
Let $z_1,\ldots,z_N$ be the set of complex points obtained through the projection $\pi_{\mathbb{S}^{2}}$ of $\omega_{N}$ and let $(p_{N}(x))_{N\in \mathbb{N}}$ be a sequence of polynomials defined by $p_{N}(x) = \prod_{i=1}^{N} (x-z_{i})$.
Then 
\begin{equation*}
\mathcal{E}_{\log}(\omega_{N}) -
\displaystyle\sum_{i = 1}^{N} \log (\mu_{\rm norm} (p_{N}, z_{i}))
\geq
\kappa\,N^2-\frac32\,N\log N
+
o(N\log(N))
\end{equation*}
%
%
and

\begin{equation*}
\displaystyle \prod_{i=1}^{N} ||x-z_{i}||
\geq
o(1)
\frac{e^{
\frac{N}{2}
}}{N}
\left|\left| \prod_{i=1}^{N} (x-z_{i}) \right|\right|,
\end{equation*}
where $||*||$ is the Bombieri-Weyl norm.



As we have shown in Theorem \ref{sharp2}, polynomials with optimal condition number, i.e. whose roots, taken as points on the proyective space, are well separated on the Riemann sphere, attain the greatest difference between the norm of the product and the product of the norms.
The oposite behaviour happens when we take all roots to be equal, the same point on the Riemann sphere.


\begin{lem}\label{lemaminimo}
Let $a\in\mathbb{C}$, then
\begin{equation*}
 ||x-a||^{N}
=
\left|\left| (x-a)^{N} \right|\right|.
\end{equation*}
\end{lem}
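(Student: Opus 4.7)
The plan is to verify the identity by a direct computation on both sides using the coefficient formula for the Bombieri--Weyl norm, exploiting the fact that the $\binom{N}{i}^{-1}$ weights in the norm exactly cancel the squares of the binomial coefficients that appear when expanding $(x-ay)^N$.

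First I would compute $\|x-a\|$. The homogenization of $x-a$ is the degree-$1$ polynomial $g(x,y)=x-ay$, which has coefficients $a_1=1$ and $a_0=-a$ in the notation of \eqref{eq:polyg}. The coefficient formula gives
\begin{equation*}
\|x-a\|^{2}=\binom{1}{1}^{-1}|1|^{2}+\binom{1}{0}^{-1}|{-a}|^{2}=1+|a|^{2},
\end{equation*}
so $\|x-a\|^{N}=(1+|a|^{2})^{N/2}$.

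Next I would compute $\|(x-a)^{N}\|$. The homogenization of $(x-a)^{N}$ is $(x-ay)^{N}$, which by the binomial theorem equals $\sum_{i=0}^{N}\binom{N}{i}(-a)^{N-i}x^{i}y^{N-i}$. Applying the coefficient formula and using the key cancellation $\binom{N}{i}^{-1}\binom{N}{i}^{2}=\binom{N}{i}$, I obtain
\begin{equation*}
\|(x-a)^{N}\|^{2}=\sum_{i=0}^{N}\binom{N}{i}^{-1}\left|\binom{N}{i}(-a)^{N-i}\right|^{2}=\sum_{i=0}^{N}\binom{N}{i}|a|^{2(N-i)}=(1+|a|^{2})^{N},
\end{equation*}
where the last step is the binomial theorem applied to $(1+|a|^{2})^{N}$. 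Comparing with the previous paragraph yields $\|(x-a)^{N}\|=(1+|a|^{2})^{N/2}=\|x-a\|^{N}$.

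There is no real obstacle here: the only subtlety is the clean algebraic cancellation between the inverse binomials in the definition of the Weyl norm and the square of the binomials produced by expanding $(x-ay)^{N}$. This lemma is in fact the extremal (equality) case of Bombieri's inequality (Theorem \ref{thmBombi} / Corollary \ref{corBombi}) applied to the product of $N$ identical linear forms, and an alternative route would be to invoke the unitary invariance $\|g\|=\|g\circ U\|$ to send $a$ to $0$ and reduce to the trivial identity $\|x\|^{N}=\|x^{N}\|=1$, but the direct computation above is shorter and self-contained.
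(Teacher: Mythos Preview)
Your proof is correct and follows essentially the same route as the paper: a direct computation of $\|(x-a)^N\|$ via the binomial expansion and the cancellation $\binom{N}{i}^{-1}\binom{N}{i}^{2}=\binom{N}{i}$, collapsing to $(1+|a|^2)^{N/2}=\|x-a\|^N$. The paper's argument is the same one-line calculation with a slightly different indexing, and your closing remark about the unitary-invariance shortcut is a nice additional observation not present in the paper.
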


\begin{proof}
It is an easy computation:
\begin{multline*}
\left|\left| (x-a)^{N} \right|\right|
=
\left|\left| \sum_{i=0}^{N} {N \choose i} x^{N-i} (-a)^{i} \right|\right|
=
\sqrt{\sum_{i=0}^{N} {N \choose i}^{-1}\left| {N \choose i}(-a)^{i}  \right|^{2} }
\\
=
\sqrt{\sum_{i=0}^{N} {N \choose i} |a|^{2i} }
=
(1 + |a|^2)^{\frac{N}{2}}
=
 ||x-a||^{N}.
\end{multline*}
\end{proof}


\section{Open problems}\label{sec6}

\subsection{On Problem \ref{mainprob}}
Theorems \ref{norms} and \ref{sharp2} pose the following scenario:
\begin{equation*}
\displaystyle \prod_{i=1}^{N} ||x-z_{i}||
\leq
K_{N}
\sqrt{\frac{e^{N}}{N+1}}
\left|\left| \prod_{i=1}^{N} (x-z_{i}) \right|\right|,
\end{equation*}
with $K_{N}$ the minimal value satisfying the previous inequality, i.e. the minimal value such that for any set of $N$ complex numbers $z_1,\ldots,z_N$ the inequality is satisfied.
We know that
\begin{equation*}
0
<
K_{N}
\leq
1.
\end{equation*}
Actually, we can compute $K_{N}$ for the first values of $N$.

For $N=2$, $K_{2}\frac{e}{\sqrt{3}} = \sqrt{2}$, as given in the Bombieri inequality (Theorem \ref{thmBombi}), so we have 
\begin{equation*}
K_{2} = \frac{\sqrt{6}}{e}\approx 0.9011\ldots
\end{equation*}
For $N=3$, we can take $z_{1},z_{2},z_{3}$ as the three roots of $x^3 =1$, obtaining
\begin{equation*}
K_{3} = \frac{4}{e\sqrt{e}} \approx 0.8925\ldots
\end{equation*}
Finally, for $N= 4$ we have to take the complex points coming from the stereographic proyection of any regular tetrahedron inscribed in $\S$.
This give us
\begin{equation*}
K_{4} = \frac{3\sqrt{5}}{e^2} \approx 0.9078\ldots
\end{equation*}
These simple computations lead us to the following question.
\begin{prob}\label{myprob}
Is there a universal constant $A$ such that $K_{N} = A + o(1)$ for all $N\in\mathbb{N}$?
\end{prob}

Now we quote \cite{beltran_2012}:
\begin{displayquote}
\textit{Experiments suggest that minimising $\mathcal{E}_{\log}$ is a problem similar to minimising the sum of $\log \mu_{\rm norm}(p_{N}, z_i)$, and to maximising the quotient $\frac{\displaystyle \prod_{i=1}^{N} ||x-z_{i}||}{\left|\left| \displaystyle\prod_{i=1}^{N} (x-z_{i}) \right|\right|}.$}
\end{displayquote}
This statement suggests that if the answer to Problem \ref{myprob} is positive that may help with the resolution of Problem \ref{smale7}.



\subsection{On Problem \ref{prob1}}
Theorem \ref{thmain2} provides us with a new bound for Problem \ref{eq:main}.
Let $P_{i}(x) = \prod_{j=1}^{N_{i}} (x-z_{j}^{(i)})$ be polynomials of degree $N_{i}$ for $1\leq i \leq m$, then using Lemma \ref{lemlog} and Theorem \ref{thmain2} we have that
\begin{multline*}
||P_{1}||\ldots||P_{m}||
=
\left| \left|
\prod_{j=1}^{N_{1}} (x-z_{j}^{(1)})
\right|\right|
\ldots
\left|\left|
\prod_{j=1}^{N_{m}} (x-z_{j}^{(m)})
\right|\right|
\\
\leq
\prod_{j=1}^{N_{1}} 
\left| \left|
(x-z_{j}^{(1)})
\right|\right|
\ldots
\prod_{j=1}^{N_{m}}
\left|\left|
 (x-z_{j}^{(m)})
\right|\right|
\leq
\sqrt{\frac{e^{(N_{1}+\ldots+N_{m})}}{(N_{1}+\ldots+N_{m}+1)}}
||P_{1}\ldots P_{m}||.
\end{multline*}
This new bound is better than the one provided by Corollary \ref{corBombi} only in certain cases.
For example, if we take $P_{i}(x) = (x-z^{(i)})$, then Theorem \ref{thmain2} gives us the bound
\begin{equation*}
||P_{1}||\ldots||P_{m}||
\leq
\sqrt{\frac{e^{m}}{m+1}}
||P_{1}\ldots P_{m}||,
\end{equation*}
which is clearly smaller than the bound provided by Corollary \ref{corBombi}:
\begin{equation*}
||P_{1}||\ldots||P_{m}||
\leq
\sqrt{m!}
||P_{1}\ldots P_{m}||.
\end{equation*}
If instead we take $P_{1}(x) = \prod_{j=1}^{N-1} (x-z_{j})$, $P_{2}(x) = (x-a)$ then the bound given by Theorem \ref{thmain2} reads
\begin{equation*}
||P_{1}||||P_{2}||
\leq
\sqrt{\frac{e^{N}}{N+1}}
||P_{1} P_{2}||,
\end{equation*}
and the one given by Corollary \ref{corBombi}:
\begin{equation*}
||P_{1}||||P_{2}||
\leq
\sqrt{N}
||P_{1} P_{2}||.
\end{equation*}
The new inequality for products of polynomials may read as in the following theorem.
\begin{thm}
Let $P_{1},\ldots,P_{m}$ be univariate polynomials of degrees $k_{i}$ respectively.
Then
\begin{equation*}
||P_{1}||\ldots||P_{m}||
\leq
\min
\left\lbrace
\sqrt{\frac{(k_{1}+\ldots +k_{m})!}{k_{1}!\ldots k_{m}!}}
,
\sqrt{\frac{e^{(k_{1}+\ldots+k_{m})}}{(k_{1}+\ldots+k_{m}+1)}}
\right\rbrace
||P_{1}\ldots P_{m}||
.
\end{equation*}
\end{thm}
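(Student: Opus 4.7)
My plan is to observe that the statement simply asserts that the minimum of two valid upper bounds is itself a valid upper bound. So the task reduces to justifying each of the two bounds separately and then taking their minimum.

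For the first bound $\sqrt{(k_1+\ldots+k_m)!/(k_1!\ldots k_m!)}$, I would just rearrange Corollary \ref{corBombi}: that corollary gives $\|P_1\ldots P_m\|\geq \sqrt{k_1!\ldots k_m!/(k_1+\ldots+k_m)!}\;\|P_1\|\ldots \|P_m\|$, and dividing through yields exactly the desired inequality. Nothing more is needed here.

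For the second bound $\sqrt{e^{k_1+\ldots+k_m}/(k_1+\ldots+k_m+1)}$, I would reproduce the three-line derivation already sketched in the preceding paragraphs. First, by homogeneity of the Bombieri--Weyl norm under scaling, we may assume each $P_i$ is monic (the leading coefficients cancel on both sides). Factor $P_i(x)=\prod_{j=1}^{k_i}(x-z_j^{(i)})$ using its complex roots with multiplicity. Second, apply Lemma \ref{lemlog} (the submultiplicativity of the Bombieri--Weyl norm over linear factors) to each $P_i$ to obtain $\|P_i\|\leq \prod_{j=1}^{k_i}\|x-z_j^{(i)}\|$; multiplying over $i$ bounds $\|P_1\|\ldots\|P_m\|$ by the product of the norms of all $k_1+\ldots+k_m$ linear factors pooled together. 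Third, apply Theorem \ref{thmain2} with $N=k_1+\ldots+k_m$ to this pooled product, which yields the stated bound in terms of $\|P_1\ldots P_m\|$ since the product of all the linear factors equals $P_1\ldots P_m$.

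Finally, since both inequalities hold for the same left-hand side and the same polynomial $P_1\ldots P_m$ on the right, the minimum of the two constants is also a valid upper bound, which is exactly the claim. There is no real obstacle in this argument since all the heavy lifting was done in Corollary \ref{corBombi}, Lemma \ref{lemlog}, and Theorem \ref{thmain2}; the only mild subtlety is to remember to scale to monic before invoking the second chain so that Theorem \ref{thmain2}, stated for monic polynomials with prescribed roots, is directly applicable.
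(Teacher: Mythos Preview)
Your proposal is correct and follows the paper's own derivation almost verbatim: the paper obtains the first bound from Corollary~\ref{corBombi} and the second from the chain Lemma~\ref{lemlog} $\Rightarrow$ Theorem~\ref{thmain2}, exactly as you do. Your explicit reduction to monic polynomials via homogeneity of both sides is a useful clarification that the paper leaves implicit when it simply writes $P_i(x)=\prod_{j=1}^{N_i}(x-z_j^{(i)})$.
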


\section{Auxiliary lemmas}\label{sec7}

In this section we present some of the lemmas used for proving the previous statements.

\begin{lem}\label{LemmaR}
Let $\kappa$ be as defined in equation \eqref{conjetura} and let $\omega_{N} = \{ x_{1},\ldots ,x_N \}\subset\mathbb{S}^{2}$, then
\begin{equation*}
\left(\int_{ \S}\prod_{i=1}^N|p-x_i|^2\,d\sigma(p)\right)^{\frac{1}{2}}
\geq e^{-\kappa N}.
\end{equation*}
\end{lem}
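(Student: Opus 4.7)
The proof is a direct application of Jensen's inequality combined with the rotational symmetry of $\sigma$. Writing $I = \int_{\S} \prod_{i=1}^N |p-x_i|^2 \, d\sigma(p)$ and noting that $\sigma$ is a probability measure on $\S$, concavity of $\log$ gives
\begin{equation*}
\log I \;\geq\; \int_{\S} \log \prod_{i=1}^N |p-x_i|^2 \, d\sigma(p) \;=\; 2\sum_{i=1}^N \int_{\S} \log |p-x_i| \, d\sigma(p).
\end{equation*}

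The key observation is that for every fixed $x_i \in \S$, the single-variable integral $\int_{\S} \log|p-x_i|\, d\sigma(p)$ is independent of $x_i$. This follows because $\sigma$ is invariant under the orthogonal group and any two points of $\S$ can be mapped to each other by a rotation. Consequently, applying Fubini to the double integral defining $\kappa$ in \eqref{conjetura},
\begin{equation*}
-\kappa \;=\; \int_{\S}\int_{\S} \log|p-x|\, d\sigma(p)\,d\sigma(x) \;=\; \int_{\S} \log|p-x_i|\, d\sigma(p)
\end{equation*}
for each $x_i \in \S$. Plugging this into the Jensen bound yields $\log I \geq -2\kappa N$, and taking square roots produces the desired inequality $I^{1/2} \geq e^{-\kappa N}$.

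There is essentially no obstacle: the proof reduces to the two-line observation above. The only thing that requires mild care is that the integrand $\prod_i |p-x_i|^2$ can vanish on the finite set $\{x_1,\dots,x_N\}$, but this set has $\sigma$-measure zero, so Jensen applies without modification (equivalently, one can replace $|p-x_i|^2$ by $|p-x_i|^2 + \varepsilon$ and let $\varepsilon \to 0^+$). Equality in Jensen would force $\prod_i |p-x_i|^2$ to be $\sigma$-almost everywhere constant, which fails whenever $N \geq 1$, so the bound is strict but not sharp in its constant — which matches the way the lemma is used in the main results.
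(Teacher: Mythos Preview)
Your proof is correct and follows essentially the same route as the paper: Jensen's inequality applied to the probability measure $\sigma$, followed by the identity $\int_{\S}\log|p-x_i|\,d\sigma(p)=-\kappa$ for each fixed $x_i$. The additional remarks on rotational invariance, the measure-zero zero set, and strictness of the inequality are sound but not needed for the lemma as stated.
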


\begin{proof}
Using Jensen's inequality we have
\begin{multline*}
\log \left(\int_{ \S}\prod_{i=1}^N|p-x_i|^2\,d\sigma(p)\right)^{\frac{1}{2}}
=
\frac{1}{2}\log \int_{ \S}\prod_{i=1}^N|p-x_i|^2\,d\sigma(p)
\\
\geq 
\frac{1}{2} \int_{  \S}\log\prod_{i=1}^N|p-x_i|^2\,d\sigma(p)
=
\sum_{i=1}^N \int_{  \S}\log|p-x_i|\,d\sigma(p)=-\kappa N,
\end{multline*}
and hence $\left(\int_{ \S}\prod_{i=1}^N|p-x_i|^2\,d\sigma(p)\right)^{\frac{1}{2}}\geq e^{-\kappa N}$. 
\end{proof}

\begin{lem}\label{lemlog}
Let $P,Q$ be two univariate polynomials with complex coefficients.
Then,
\begin{equation*}
|| P||  ||Q ||
\geq 
|| PQ ||,
\end{equation*}
where $||*||$ is the Bombieri-Weyl norm.
\end{lem}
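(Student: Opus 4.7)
The plan is to reduce the inequality to a single application of the Cauchy--Schwarz inequality, with weights chosen so that the Vandermonde identity collapses the cross-terms and yields precisely $\|P\|^2\|Q\|^2$ on the right.

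First I would fix the notation. Write $P(z)=\sum_{i=0}^{m}a_{i}z^{i}$ and $Q(z)=\sum_{j=0}^{n}b_{j}z^{j}$, so that $(PQ)(z)=\sum_{k=0}^{m+n}c_{k}z^{k}$ with $c_{k}=\sum_{i+j=k}a_{i}b_{j}$. By the definition of the Bombieri--Weyl norm on the homogenizations,
\begin{equation*}
\|P\|^{2}=\sum_{i=0}^{m}\binom{m}{i}^{-1}|a_{i}|^{2},\qquad \|Q\|^{2}=\sum_{j=0}^{n}\binom{n}{j}^{-1}|b_{j}|^{2},\qquad \|PQ\|^{2}=\sum_{k=0}^{m+n}\binom{m+n}{k}^{-1}|c_{k}|^{2}.
\end{equation*}

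The key step is to bound each $|c_{k}|^{2}$. Writing
\begin{equation*}
c_{k}=\sum_{i+j=k}\sqrt{\binom{m}{i}\binom{n}{j}}\cdot\frac{a_{i}b_{j}}{\sqrt{\binom{m}{i}\binom{n}{j}}}
\end{equation*}
and applying Cauchy--Schwarz together with Vandermonde's identity $\sum_{i+j=k}\binom{m}{i}\binom{n}{j}=\binom{m+n}{k}$ yields
\begin{equation*}
|c_{k}|^{2}\leq \binom{m+n}{k}\sum_{i+j=k}\frac{|a_{i}|^{2}|b_{j}|^{2}}{\binom{m}{i}\binom{n}{j}}.
\end{equation*}
Dividing by $\binom{m+n}{k}$ and summing over $k$ then telescopes the double sum into a product:
\begin{equation*}
\|PQ\|^{2}\leq \sum_{k=0}^{m+n}\sum_{i+j=k}\frac{|a_{i}|^{2}|b_{j}|^{2}}{\binom{m}{i}\binom{n}{j}}=\left(\sum_{i=0}^{m}\binom{m}{i}^{-1}|a_{i}|^{2}\right)\left(\sum_{j=0}^{n}\binom{n}{j}^{-1}|b_{j}|^{2}\right)=\|P\|^{2}\|Q\|^{2},
\end{equation*}
and taking square roots finishes the argument.

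There is no real obstacle here: the only mildly delicate point is matching the Cauchy--Schwarz weights with the reciprocal binomials appearing in the Bombieri--Weyl norm so that Vandermonde's identity produces exactly the factor $\binom{m+n}{k}$ needed to cancel the weight in $\|PQ\|^{2}$. An alternative route would use the integral representation $\|g\|^{2}=\frac{N+1}{\pi}\int_{\mathbb{P}(\mathbb{C}^{2})}|g(\eta)|^{2}\|\eta\|^{-2N}\,dV(\eta)$ together with the Cauchy--Schwarz inequality for the $L^{2}$-inner product, but the combinatorial proof is cleaner and more elementary.
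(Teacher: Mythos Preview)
Your argument is correct: the weighted Cauchy--Schwarz step combined with Vandermonde's identity is exactly the classical way to show that the Bombieri--Weyl norm is submultiplicative, and every line checks out.

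The paper's own proof is not really a proof at all: it simply states that the Bombieri--Weyl norm is an algebra norm (i.e.\ submultiplicative) and points to Lemma~\ref{lemaminimo} for the equality case $\|(x-a)^{N}\|=\|x-a\|^{N}$. What you have done is supply, in a self-contained and elementary way, precisely the computation that sits behind the fact the paper cites. In that sense your write-up is strictly more informative than the paper's, at the cost of a few extra lines. The alternative integral route you mention would also work but is less transparent, since the prefactors $\frac{N+1}{\pi}$ depend on the degree and one has to be slightly careful when comparing $L^{2}$-norms at different degrees; your combinatorial argument avoids that bookkeeping entirely.
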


\begin{proof}
That follows from the fact that Bombieri norm is an algebra norm.
In Lemma \ref{lemaminimo} we give a polynomial satisfying the equation.
\end{proof}

\begin{lem}\label{lemcond}
For any polynomial $P$ and any of its roots $z$,
\begin{equation*}
\mu_{\rm norm}(P,z) \geq 1.
\end{equation*}
\end{lem}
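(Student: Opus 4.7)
The plan is to combine the closed-form expression
$$\mu_{\rm norm}(P,z)=\frac{\sqrt{N}\,\|P\|\,(1+|z|^2)^{N/2-1}}{|P'(z)|}$$
recorded in Section~\ref{sec2} with Bombieri's inequality (Theorem~\ref{thmBombi}) applied to the factorization of $P$ at the root $z$. Since $\mu_{\rm norm}$ is invariant under nonzero rescaling of $P$, I may take $P$ monic. If $z$ is a multiple root then $\mu_{\rm norm}(P,z)=\infty$ by definition and the bound is trivial, so I assume $z$ is a simple root and write $P(x)=(x-z)R(x)$ with $\deg R=N-1$ and $P'(z)=R(z)\neq 0$.

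The first step is to apply Bombieri's inequality to the product $(x-z)\cdot R(x)$, after passing to homogenizations so that the statement of Theorem~\ref{thmBombi} applies verbatim. Since $\|x-z\|^{2}=1+|z|^{2}$, this yields
$$\|P\|\;\geq\;\sqrt{\tfrac{1!\,(N-1)!}{N!}}\,\|x-z\|\,\|R\|\;=\;\frac{\sqrt{1+|z|^{2}}}{\sqrt{N}}\,\|R\|.$$
Substituting this into the displayed formula for $\mu_{\rm norm}$ and using $P'(z)=R(z)$ cancels the factors $\sqrt{N}$ and $\sqrt{1+|z|^{2}}$, reducing the desired inequality $\mu_{\rm norm}(P,z)\geq 1$ to the single point-evaluation estimate
$$|R(z)|\;\leq\;\|R\|\,(1+|z|^{2})^{(N-1)/2}.$$

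The second step is to establish this point-evaluation bound for the Bombieri--Weyl norm on polynomials of degree $N-1$. It is a one-line application of Cauchy--Schwarz: writing $R(x)=\sum_{k=0}^{N-1}a_{k}x^{k}$ and splitting
$$R(z)=\sum_{k=0}^{N-1}\binom{N-1}{k}^{-1/2}a_{k}\cdot\binom{N-1}{k}^{1/2}z^{k},$$
Cauchy--Schwarz together with the binomial identity $\sum_{k}\binom{N-1}{k}|z|^{2k}=(1+|z|^{2})^{N-1}$ yields exactly the claimed estimate.

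I do not expect any real obstacle, since both ingredients are either already in the paper or follow from a direct Cauchy--Schwarz. The only point worth checking carefully is that Bombieri's inequality, though formulated for bivariate homogeneous polynomials, applies here via the homogenizations of $(x-z)$ and $R(x)$, whose product is the homogenization of $P$ of bidegree $N$; this is precisely the convention under which $\|P\|$ is defined in the univariate setting, so the reduction is cost-free.
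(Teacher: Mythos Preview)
Your argument is correct. The reduction via Bombieri's inequality applied to the factorization $P=(x-z)R$ is clean, and the point-evaluation bound $|R(z)|\leq\|R\|(1+|z|^{2})^{(N-1)/2}$ is exactly the reproducing-kernel inequality for the Bombieri--Weyl norm, which your Cauchy--Schwarz computation recovers directly. The remark about homogenizations is also fine: since $P$ is taken monic, $R$ has exact degree $N-1$, so the product of the degree-$1$ and degree-$(N-1)$ homogenizations is the degree-$N$ homogenization of $P$, and Theorem~\ref{thmBombi} applies as stated.

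As for comparison with the paper: the paper does not give its own proof of Lemma~\ref{lemcond}; it simply refers to \cite{SHUB19934}. Your route is therefore more self-contained, and it has the pleasant feature of staying entirely within the tools already assembled in the article (Theorem~\ref{thmBombi} and the Bombieri--Weyl formalism), rather than appealing to an external source. The Shub--Smale argument is essentially the same in spirit---it also boils down to the point-evaluation bound for the Bombieri--Weyl inner product---so nothing is lost or gained mathematically, but your version makes the dependence on Bombieri's inequality explicit.
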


\begin{proof}
See \cite{SHUB19934}*{pg. 6}.
\end{proof}

\begin{lem}\label{lem_ener}
Let $\hat{\omega}_{N} = \{ \hat{z}_{1},\ldots,\hat{z}_{N} \}$ be a set of points on the Riemann sphere $\mathbb{S}$ and let $\omega_{N} = \{ x_{1},\ldots,x_{N} \}\subset\mathbb{S}^{2}$ be the proyection of the set $\hat{\omega}_{N}$ by the map $h$.
Then,
\begin{equation*}
\mathcal{E}_{\log}^{\mathbb{S}}(\hat{\omega}_{N}) 
=
\mathcal{E}_{\log}(\omega_{N})
- 
\log(2)(N^2 - N).
\end{equation*}
\end{lem}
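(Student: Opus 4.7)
The plan is to exploit that $h$ is a similarity transformation of $\mathbb{R}^{3}$ with scaling factor $2$. Writing $c=(0,0,1/2)$ for the center of the Riemann sphere $\mathbb{S}$, the map can be rewritten as $h(\hat z)=2\hat z-(0,0,1)=2(\hat z-c)$, which makes manifest that $h$ is the composition of a translation by $-c$ with a dilation by $2$, sending the radius-$1/2$ sphere $\mathbb{S}$ to the unit sphere $\mathbb{S}^{2}$.

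First I would observe that the translation part of $h$ cancels when taking differences, so for any two points $\hat z_i,\hat z_j\in\mathbb{S}$,
\begin{equation*}
x_i-x_j=h(\hat z_i)-h(\hat z_j)=2(\hat z_i-\hat z_j),
\end{equation*}
and hence $|x_i-x_j|=2\,|\hat z_i-\hat z_j|$. Taking logarithms gives the pointwise identity $\log|\hat z_i-\hat z_j|=\log|x_i-x_j|-\log 2$ for every ordered pair $i\neq j$.

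Next I would plug this identity into the definition \eqref{defnener} of $\mathcal{E}_{\log}$ applied to $\omega_{N}$ and its obvious analogue $\mathcal{E}_{\log}^{\mathbb{S}}(\hat\omega_{N})=-\sum_{i\neq j}\log|\hat z_i-\hat z_j|$ on $\mathbb{S}$. Summing the pointwise identity over the $N(N-1)=N^{2}-N$ ordered pairs $(i,j)$ with $i\neq j$ yields the constant correction $(N^{2}-N)\log 2$ multiplied by the appropriate sign, producing the claimed formula.

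There is no substantive obstacle here; the argument is a one-line change of variables. The only bookkeeping to be careful about is the sign of the constant and the combinatorial factor $N^{2}-N$ of \emph{ordered} pairs (rather than the $\binom{N}{2}$ one gets from unordered pairs), both of which follow directly from the convention fixed in \eqref{defnener}.
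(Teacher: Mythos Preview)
Your proposal is correct and follows essentially the same route as the paper's own proof: both arguments observe that $h$ doubles pairwise distances, $|x_i-x_j|=2|\hat z_i-\hat z_j|$, take logarithms, and sum over the $N(N-1)$ ordered pairs to extract the additive constant $\log(2)(N^2-N)$. There is no meaningful difference between the two.
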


\begin{proof}
Using the homotethic transformation $h$,
\begin{multline*}
\mathcal{E}_{\log}(\omega_N)
=
-
\sum_{i\neq j}\log ||x_i-x_j||
=
-
\sum_{i\neq j}\log ||2\hat{z}_{i} - (0,0,1)-(2\hat{z}_{j} - (0,0,1))||
\\
=
-
\sum_{i\neq j}\log ||2\hat{z}_{i} -2\hat{z}_{j}||
=
-
\sum_{i\neq j}\log 2
-
\sum_{i\neq j}\log ||\hat{z}_{i}-\hat{z}_{j}||
\\
=
-
\log (2)N(N-1)
-
\sum_{i\neq j}\log ||\hat{z}_{i}-\hat{z}_{j}||
\\
=
-\log(2)(N^2 - N) +
\mathcal{E}_{\log}^{\mathbb{S}}(\omega_N).
\end{multline*}
\end{proof}


\subsection*{Acknowledgements}
When I was a PhD student my advisor, Carlos Beltrán, pointed at that famous Bombieri inequality and told me that it shouldn't be sharp for several polynomials.
Well, actually he said: "Seguro que es mucho más pequeña, es algo que se tiene que poder hacer". 
I want to thank him for pointing out this beautiful formula.
You were right, Carlos, it wasn't sharp.
I also want to acknowledge H\r{a}kan Hedenmalm for the discussions we had on this topic while I was writing the final version of this manuscript.


\bibliography{Bombieri}{}
\bibliographystyle{plain}


\end{document}